\newtheorem{theorem}{Theorem}[section]
\newtheorem{lemma}[theorem]{Lemma}
\newtheorem{proposition}[theorem]{Proposition}
\theoremstyle{definition}
\newtheorem{notion}[theorem]{}
\newtheorem{definition}[theorem]{Definition}
\newtheorem{example}[theorem]{Example}
\newtheorem{remark}[theorem]{Remark}
\title{Stable homotopy, 1-dimensional NCCW complexes, and Property (H)}
\renewcommand\@date{{%
  \vspace{-\baselineskip}%
  \large\centering
  \begin{tabular}{@{}c@{}}
    Qingnan An%\footnote{Postdoctoral Research Station of Mathematics, Hebei Normal University,
%Shijiazhuang, Hebei, China 050016 and Department of Mathematics, University of Toronto, Toronto, Ontario, Canada \ M5S 2E4.
%Email address: qingnanan1024@outlook.com}~\footnote{Corresponding Author}
  \end{tabular},~
  \begin{tabular}{@{}c@{}}
  George A. Elliott%\footnote{Department of Mathematics, University of Toronto, Toronto, Ontario, Canada \ M5S 2E4. %and The Fields Institute for Research in Mathematical Sciences, Toronto, Ontario, Canada M5T 3J1.
  % Email address: elliott@math.toronto.edu}
  \end{tabular},~
  \begin{tabular}{@{}c@{}}
  Zhichao  Liu%\footnote{Postdoctoral Research Station of Mathematics, Hebei Normal University,
%Shijiazhuang, Hebei, China 050016 and Department of Mathematics, University of Toronto, Toronto, Ontario, Canada \ M5S 2E4. %and The Fields Institute for Research in Mathematical Sciences, Toronto, Ontario, Canada M5T 3J1.
%   Email address: zhichao.liu@utoronto.ca}%~\footnote{Corresponding Author}
  \end{tabular},~~and
  \begin{tabular}{@{}c@{}}
  Yuanhang Zhang%\footnote{School of Mathematics, Jilin University, Changchun, Jilin, China 130012. Email address: zhangyuanhang@jlu.edu.cn}
 %   \normalsize
  \end{tabular}
}}
\begin{document}
%\title{ Stable homotopy, 1 dimensional NCCW and Property (H
%\author{Qingnan An, Zhichao Liu and Yuanhang Zhang
%\\
%{\small\textit{}}}
%\date{}
%}
\maketitle

\begin{abstract}
In this paper, we show that
the homomorphisms between two unital one-dimensional NCCW complexes with the same KK-class are stably homotopic, i.e., with adding on a common homomorphism (with finite dimensional image), they are homotopic. As a consequence, any one-dimensional NCCW complex has the Property (H).

{\sl Keywords:} Elliott-Thomsen algebra; $\mathrm{KK}$-theory; stable homotopy; Property (H); Classification.

{\sl 2010 MR Subject Classification:} Primary 46L35;
Secondary 46L80, 19K14, 19K33, 19K35.
\end{abstract}
\section{Introduction}
\begin{definition}
Let $A,B$ be $\mathrm{C}^*$-algebras and $\psi_1,\psi_2$ be homomorphisms from $A$ to $B$. We say $\psi_1,\psi_2$ are \textit{stably homotopic}, if there exists a homomorphism $\eta$ from $A$ to $M_r(B)$ for some integer $r$ such that
$$\psi_1\oplus \eta\sim_h\psi_2\oplus \eta,$$
i.e., $\psi_1\oplus \eta$ and $\psi_2\oplus \eta$ are homotopic as homomorphisms from $A$ to $M_{r+1}(B)$.
In particular, stably homotopic homomorphisms induce the same $\mathrm{KK}$-class.
\end{definition}
Note that the homotopy classes of homomorphisms from $A$ to $B\otimes \mathcal{K}$ forms a abelian semigroup $[A,B\otimes\mathcal{K} ]$. Note that there is a natural map $\gamma:\,[A,B\otimes\mathcal{K} ]\to \mathrm{KK}(A,B)$, while $\gamma$ is not always injective,
in fact, from the construction of Grothendieck group of $[A,B\otimes\mathcal{K} ]$, we know that, as mentioned above, two homomorphisms, which are not homotopic with each other but stably homotopic with each other, will induce the same KK-class.
So a reasonable question is, conversely, if two homomorphisms induce the same KK-class, are they stably homotopic? Further, if they are stably homotopic but not homotopic, what is the obstruction?

We will consider these questions in the class $\mathcal{C}$ of sub-homogeneous algebras.
Making such questions clear, on one hand, will be helpful to understand the structure of the KK-group which is significant for classification theory of $\mathrm{C}^*$-algebras, and on the other hand, will lead a proof about the Property (H) (Theorem 4.8) which is quite important in the uniqueness theorem (see Lemma 7.5 of \cite{ALZ} as an application for a special case).
(Property (H) is introduced by Dadarlat in \cite{D1}, and used in the classification of $\mathrm{C}^*$-algebras, especially in uniqueness part, see \cite{D1} and \cite{DG}.)

The following is the class of algebras we are concerned with:
\begin{definition}[The class $\mathcal{C}$]\label{ET}
Let $F_1$ and $F_2$ be finite dimensional ${\mathrm C}^*$-algebras and let
$\varphi_0,\,\varphi_1:\,F_1\to F_2$ be homomorphisms.
Set
\begin{align*}
A&=A(F_1,F_2,\varphi_0,\varphi_1)
\\
&=\{(f,a)\in  C([0,1],F_2) \oplus F_1:\,f(0)=\varphi_0(a)\,  {\rm and}\, f(1)=\varphi_1(a)\}.
\end{align*}
Denote by $\mathcal{C}$ the class of all {\bf unital} such $\mathrm{C}^*$-algebras.
\end{definition}
The ${\mathrm C}^*$-algebras constructed in this way have been studied
by Elliott and Thomsen \cite{ET} (see also \cite{Ell} and \cite{Th}), which are sometimes called Elliott-Thomsen algebras or one-dimensional non-commutative finite CW complexes (NCCW).

We begin with three examples with different types of obstructions, which exist for homomorphisms but vanish in the stable sense.
\begin{example}\label{EX 1}
Let $A=A(F_1,F_2,\varphi_0,\varphi_1)$, where $F_1=\mathbb{C}\oplus\mathbb{C}\oplus\mathbb{C}$, $F_2=M_2(\mathbb{C})$, and
$$\varphi_0(a\oplus b\oplus c)=\bigg(\begin{array}{cc}
a& \\[1.5 mm]
 &c
\end{array}\bigg),
\quad
\varphi_1(a\oplus b\oplus c)=\bigg(\begin{array}{cc}
b& \\[1.5 mm]
 &c
\end{array}\bigg).
$$

Define three homomorphisms $\delta_1$, $\delta_2$, and $\delta_3:\,A\to \mathbb{C}$:
$$
\delta_1 (f,a\oplus b\oplus c)=a,\quad (f,a\oplus b\oplus c)\in A,
$$
$$
\delta_2 (f,a\oplus b\oplus c)=b,\quad(f,a\oplus b\oplus c)\in A,
$$
and
$$
\delta_3 (f,a\oplus b\oplus c)=c,\quad(f,a\oplus b\oplus c)\in A.
$$
%Then  $\delta_1$, $\delta_2$ and $\delta_3$ induce the diagrams
%$$
%\xymatrixcolsep{3pc}
%\xymatrix{
%{\,\,\mathbb{Z}\oplus\mathbb{Z}\oplus\mathbb{Z}\,\,} \ar[d]_-{(1,0,0)} \ar[r]^-{(1,-1,0)}
%& {\,\,\mathbb{Z}\,\,} \ar[d]^-{0} \\
%{\,\,\mathbb{Z} \,\,} \ar[r]_-{0}
%& {\,\,0 \,\,}}
%$$
%$$
%\xymatrixcolsep{3pc}
%\xymatrix{
%{\,\,\mathbb{Z}\oplus\mathbb{Z}\oplus\mathbb{Z}\,\,} \ar[d]_-{(0,1,0)} \ar[r]^-{(1,-1,0)}
%& {\,\,\mathbb{Z}\,\,} \ar[d]^-{0} \\
%{\,\,\mathbb{Z} \,\,} \ar[r]_-{0}
%& {\,\,0 \,\,},}
%$$
%and
%$$
%\xymatrixcolsep{3pc}
%\xymatrix{
%{\,\,\mathbb{Z}\oplus\mathbb{Z}\oplus\mathbb{Z}\,\,} \ar[d]_-{(0,0,1)} \ar[r]^-{(1,-1,0)}
%& {\,\,\mathbb{Z}\,\,} \ar[d]^-{0} \\
%{\,\,\mathbb{Z} \,\,} \ar[r]_-{0}
%& {\,\,0 \,\,}.}
%$$
Note that we have
${\rm Hom}(A,\mathbb{C})=\{0,\delta_1,\delta_2,\delta_3\}$ (all the homomorphisms from $A$ to $\mathbb{C}$). Suppose that
$$
\delta_1\sim_h \delta_2,
$$
and denote the homotopy path in ${\rm Hom}(A,\mathbb{C})$  by $\Phi_\theta$, $\theta\in [0,1]$. Then
$
\{\Phi_\theta(g)\mid\, \theta\in [0,1]\},
$
is a connected subset of $\mathbb{C}$, for any $g\in A$. But with
$$
g_0=(\left(
\begin{array}{cc}
  t  &  \\
   & 0
\end{array}
\right),~0\oplus 1\oplus 0)\in A,
$$
we have $\{\Phi_\theta(g_0)\mid\, \theta\in [0,1]\}=\{0,1\}$, which is a contradiction.

So
$$
\delta_1\nsim_h \delta_2,
\quad
{\rm{but}}
\quad
\delta_1\oplus\delta_3 \sim_h \delta_2\oplus\delta_3.
$$
The stable homotopy path can be chosen as $H_\theta(f, a\oplus b\oplus c)=f(\theta)$, $\theta\in [0,1]$.
In particular,
$$
\mathrm{KK}(\delta_1)=\mathrm{KK} (\delta_2).
$$
%Further more, denoting $S\delta_1,\,S\delta_2$ by the homomorphisms from $SA$ to $SB$ induced by $\delta_1,\, \delta_2$,
%we also have $S\delta_1\sim_h S\delta_2$.
\end{example}

\begin{example}\label{EX 2}
Let $F_1=\mathbb{C}$, $F_2=\mathbb{C}\oplus\mathbb{C}$, and $\varphi_0=\varphi_1: F_1\ni a \mapsto a\oplus a\in F_2$.
Set $A=A(F_1,F_2,\varphi_0,\varphi_1)\cong C(X)$; in fact, $X$ is the topological space ``8'' and $B=\widetilde{C_0(0,1)}\cong C(S^1)$.

Construct homomorphisms $\psi_1,\psi_2$ from $A$ to $B$,
$$\psi_1(f_1\oplus f_2,a)=\begin{cases}
  f_1(4t), & \mbox{if } 0\leq t\leq \frac{1}{4} \\
  f_2(4t-1), & \mbox{if } \frac{1}{4}\leq t\leq \frac{1}{2} \\
  f_1(3-4t), & \mbox{if } \frac{1}{2}\leq t\leq \frac{3}{4} \\
  f_2(4t-3), & \mbox{if } \frac{3}{4}\leq t\leq 1
\end{cases}$$
and
$$\psi_2(f_1\oplus f_2,a)=\begin{cases}
  f_2(2t), & \mbox{if } 0\leq t\leq \frac{1}{2} \\
  f_2(2t-1), & \mbox{if } \frac{1}{2}\leq t\leq 1
\end{cases}.$$
Note that the fundamental group of $X$ is the free group $\mathbb{F}_2$ with two generators $\{x,y\}$.
By Gelfand transform, the homomorphisms $\psi_1,\psi_2$ induce two continuous maps $\psi_1',\psi_2'$ from $S^1$
to $X$ and
$$[\psi_1']=xyx^{-1}y\quad {\rm and}\quad [\psi_2']=y^2. $$
By Gelfand transform, we know that $\psi_1\nsim_h\psi_2$.
But we have $\psi_1\oplus \delta\sim_h\psi_2\oplus\delta$,
where
$$
\delta(f_1\oplus f_2,a)=a.
$$
(A concrete path can be obtained by applying the trick in Remark \ref{trick} to points $\frac{1}{4}, \frac{1}{2}$, and $\frac{3}{4}$.)
In particular, $\mathrm{KK}(\psi_1)=\mathrm{KK}(\psi_2)$.
% note that $\psi_1,\psi_2$ induce the same diagram in $C(A,B)$.
\end{example}

\begin{example}\label{EX 3}
Set $A=M_2(\mathbb{C})$, $B=M_2(C(S^1))$, and
$$
u=\left(
\begin{array}{cc}
  z  &  \\
   & 1
\end{array}
\right)\in U(B).
$$
Consider the natural embedding map $\iota$ from $A$ to $B$, $\iota(a)=a$. Set $\psi_1=\iota$ and $\psi_2=u\cdot \iota \cdot u^*$.
We point out that
$$
\mathrm{KK}(\psi_1)=\mathrm{KK}(\psi_2)=1\in\mathbb{Z}\cong \mathrm{KK}(A,B).
$$

Note that if the winding number corresponding to a unitary $U$ in $B$ is $2n$,
then there exists a unitary path $H_t$ in $B$ with $H_0=U$ and
$$
H_1=\left(
\begin{array}{cc}
  z^n &  \\
   & z^n
\end{array}
\right)
$$
such that
$$
U \cdot \iota \cdot U^*\sim_h H_1\cdot \iota\cdot H_1^*=\iota\quad {\rm by}\quad H_t\cdot \iota\cdot H_t^*.
$$
But we cannot apply the above trick to $u$, whose winding number is 1.

In fact, every unital homomorphism from $A$ to $B$ corresponds to a continuous map from $S^1$ to ${\rm Aut}(M_2(\mathbb{C}))$, and the fundamental group of the space of homomorphisms is the same as the fundamental group of $Aut(M_2(\mathbb{C}))$.
As $$\pi_1({\rm Aut}(M_2(\mathbb{C})))=\pi_1(U(2)/\{\lambda \cdot 1_2:\,\lambda\in\mathbb{C},\,\parallel\lambda\parallel=1\})\cong \mathbb{Z}/2\mathbb{Z},$$ we have $[\psi_1]=2\mathbb{Z}$ and $[\psi_2]=1+2\mathbb{Z}$.

However, we still have $$[u\oplus u^*]=[1_B\oplus 1_B]\in U(M_2(B))/U_0(M_2(B)),$$
i.e., there exists a unitary path $V_t$ in $M_2(B)$ with $V_0=u\oplus u^*$ and $V_1=1_B\oplus 1_B$. Then
$V_t(\psi_1\oplus0)V_t^*$ gives
$$
\psi_1\oplus (u^*\cdot 0 \cdot u)=\psi_1\oplus 0\sim_h \psi_2\oplus (u^*\cdot 0 \cdot u),
$$
where $0$ is the $0$ homomorphism from $A$ to $B$.
\end{example}
In fact, Example \ref{EX 1} shows we need more points to push the spectrum; and Example \ref{EX 2} shows that we need more points to make the homotopy class of homomorphisms become coarser ($\mathrm{KK}(C(X),C(S^1))$ is a commutative group, while fundamental group of $X$ is $\mathbb{F}_2$);
Example \ref{EX 3} shows that the fact that the whole homotopy class of a unitary $u$ does not commute with a given homomorphism may form an obstruction, but it will vanish in the stable sense.
From the tips of these examples, we pass to the general the sub-homogeneous class $\mathcal{C}$ and state our main result:
%by write down the concrete stable homotopy path:
\begin{theorem}\label{main result}
Let $A,B\in \mathcal{C}$ (Definition \ref{ET}), and let $\psi_1,\psi_2$ be homomorphisms from $A$ to $B$. Then
$\psi_1$ and $\psi_2$ are stably homotopic if, and only if,
$$
\mathrm{KK}(\psi_1)=\mathrm{KK}(\psi_2).
$$
{\bf In addition, the homomorphism $\eta$ we add can be always chosen to be a homomorphism with finite dimensional image.}
\end{theorem}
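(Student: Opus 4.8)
The ``only if'' direction needs no proof (it is the remark in the Definition), so the task is the converse together with the finite-dimensionality of $\eta$. Write $B=A(E_1,E_2,\phi_0,\phi_1)\subseteq C([0,1],E_2)\oplus E_1$ and use throughout the two pullback presentations $0\to SF_2\to A\xrightarrow{\pi_A}F_1\to 0$ and $0\to SE_2\to B\xrightarrow{\pi_B}E_1\to 0$. A homomorphism $\psi\colon A\to B$ is then exactly the pair consisting of a homomorphism $\bar\psi:=\pi_B\circ\psi\colon A\to E_1$ and the path $t\mapsto\psi_t:=\mathrm{ev}_t\circ\psi\colon A\to E_2$, subject to $\psi_0=\phi_0\circ\bar\psi$ and $\psi_1=\phi_1\circ\bar\psi$. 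The plan is to use $\mathrm{KK}(\psi_1)=\mathrm{KK}(\psi_2)$ to normalize this datum in stages, at each stage adjoining a homomorphism with finite-dimensional image to make room, until the path of $\psi_1$ can be deformed onto that of $\psi_2$; the three normalization steps are precisely the three phenomena isolated in Examples \ref{EX 1}, \ref{EX 2}, \ref{EX 3}.

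Since $A$ and $B$ are subhomogeneous, hence type~I, they satisfy the UCT; more to the point, $A$ is $\mathrm{KK}$-equivalent to the mapping cone of $[\varphi_0]-[\varphi_1]\colon F_1\to F_2$, which gives a natural short exact sequence exhibiting $\mathrm{KK}(A,B)$ as an extension of $\ker\rho_1$ by $\mathrm{coker}\,\rho_0$, where $\rho_i\colon K_i(B)^{k_2}\to K_i(B)^{k_1}$ is induced by $[\varphi_0]-[\varphi_1]$ and $k_j$ is the number of blocks of $F_j$. In this picture the image of $\mathrm{KK}(\psi)$ in $\ker\rho_1\subseteq\mathrm{KK}(SF_2,B)\cong K_1(B)^{k_2}$ is the $\mathrm{KK}$-class of the restriction $\psi|_{SF_2}$, while the image in $\mathrm{coker}\,\rho_0$ records how $\psi$ interacts with the quotient $F_1$. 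By naturality, $\mathrm{KK}(\psi_1)=\mathrm{KK}(\psi_2)$ forces $\mathrm{KK}(\mathrm{ev}_t\circ\psi_1)=\mathrm{KK}(\mathrm{ev}_t\circ\psi_2)$ in $\mathrm{KK}(A,E_2)$ for all $t$, and $\mathrm{KK}(\bar\psi_1)=\mathrm{KK}(\bar\psi_2)$ in $\mathrm{KK}(A,E_1)$ --- i.e.\ the pointwise and endpoint ``finite-dimensional'' data of $\psi_1$ and $\psi_2$ coincide.

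The construction then has three stages, mirroring the three examples. (i) \emph{Room.} Take for $\eta$ a large multiple of a finite-dimensional representation $\bigoplus_{s\in S}(\text{irreducible evaluations at }s)$ of $A$, with $S\subseteq[0,1]$ finite, landing in $M_r(\mathbb{C}\cdot 1_B)\subseteq M_r(B)$; its image is finite-dimensional, and with enough multiplicity it leaves every homomorphism encountered below free to move. (ii) \emph{Matching the paths up to a twist.} Using the classification of homomorphisms from the $1$-dimensional NCCW complex $A$ into finite-dimensional algebras (spectral multisets modulo sliding and colliding points, as in Example \ref{EX 1}) together with the pointwise $\mathrm{KK}$-equalities, first connect $(\psi_1\oplus\eta)_t$ to $(\psi_2\oplus\eta)_t$ through homomorphisms $A\to M_{r+1}(E_2)$ for each fixed $t$, then patch these pointwise homotopies into a homotopy of paths up to a single ``twisting'' defect concentrated near the endpoints $0,1$ --- a class in a $\pi_1$-type obstruction group assembled from the automorphism groups of the relevant matrix algebras and the gluing maps $\varphi_0,\varphi_1,\phi_0,\phi_1$. (iii) \emph{Killing the twist stably.} The stabilized, abelianized form of this defect is precisely the $K_1(B)^{k_2}$-component of $\mathrm{KK}(\psi_1)-\mathrm{KK}(\psi_2)=0$; the gap between it and the genuine, non-abelian, non-stable obstruction --- the $\mathbb{F}_2$-versus-$\mathbb{Z}^2$ effect of Example \ref{EX 2} and the $\pi_1(\mathrm{Aut}(M_n))=\mathbb{Z}/n$-versus-$\pi_1(U(n))=\mathbb{Z}$ effect of Example \ref{EX 3} --- is annihilated by the extra multiplicity inside $\eta$, after if necessary a further summand of the $0$-homomorphism (whose image is finite-dimensional), via the winding-number cancellation $u\oplus u^{*}\sim 1\oplus 1$. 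Running the same normalization for $\bar\psi_1,\bar\psi_2$ on $E_1$, respecting the boundary conditions throughout, and splicing all the homotopies gives $\psi_1\oplus\eta\sim_h\psi_2\oplus\eta$ with $\eta$ of finite-dimensional image. (Known semiprojectivity/stability properties of $1$-dimensional NCCW complexes can be invoked to upgrade approximate versions of these moves to exact ones.)

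The crux is stages (ii)--(iii): identifying the homotopy-theoretic obstruction to globalizing the pointwise homotopies with an explicit summand of $\mathrm{KK}(A,B)$, and proving that the residual non-stable part of it is wiped out by adjoining a homomorphism with finite-dimensional image. Equivalently, one must show that the natural map from the Grothendieck group of the semigroup $[A,B\otimes\mathcal{K}]$ to $\mathrm{KK}(A,B)$ is injective --- i.e.\ that equal $\mathrm{KK}$-class already implies equality after adding a common summand --- while keeping track that the summand required can be taken with finite-dimensional image. This is where the explicit pullback structure of $A$ and of $B$ is indispensable.
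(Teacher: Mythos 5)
Your high-level strategy --- normalize in stages, with the three stages corresponding to the three obstruction phenomena in Examples \ref{EX 1}--\ref{EX 3} --- is exactly the structure of the paper's argument, and your closing remark that the whole theorem amounts to injectivity of $\mathrm{Groth}\,[A,B\otimes\mathcal K]\to\mathrm{KK}(A,B)$ is a fair reformulation. But as written, the proposal is a roadmap rather than a proof; the two places where all the work actually happens are asserted rather than carried out.

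First, step (ii): ``patch these pointwise homotopies into a homotopy of paths up to a single twisting defect concentrated near the endpoints.'' This is the heart of the matter, and nothing in your sketch justifies it. Having a homotopy $A\to M_{r+1}(E_2)$ for each fixed $t$ says nothing about being able to choose these homotopies continuously in $t$, nor about confining the failure to a single localized defect. Worse, the naive $\mathrm{KK}$-equalities you extract --- $\mathrm{KK}(\mathrm{ev}_t\circ\psi_1)=\mathrm{KK}(\mathrm{ev}_t\circ\psi_2)$ in $\mathrm{KK}(A,E_2)$ and $\mathrm{KK}(\bar\psi_1)=\mathrm{KK}(\bar\psi_2)$ in $\mathrm{KK}(A,E_1)$ --- are extremely weak: $\mathrm{KK}(A,E_i)$ is just a $K_0$-multiplicity count and does not even remember enough to fix the diagram that a homomorphism induces in $C(A,B)$, let alone to control a $\pi_1$-defect. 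The paper fills this gap with a concrete normalization: Lemma \ref{appro3} and Theorem \ref{homo to m-standard} reduce any homomorphism up to homotopy to one of $m$-standard form (piecewise a conjugate of a diagonal of evaluations), and Lemma \ref{m to 1} then adds a finite-dimensional $\eta$ of point evaluations and applies the splicing trick of Remark \ref{trick} to reduce to $1$-standard form. This is what replaces your step (ii), and it is not a formality --- without it you have no handle on ``the'' defect at all.

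Second, step (iii): you treat the residual obstruction as if it were a single winding-number phenomenon killed by $u\oplus u^*\sim 1\oplus1$. That handles the Example \ref{EX 3} effect, but there are two distinct residual issues once both sides are in $1$-standard form: (a) the two homomorphisms may induce genuinely different diagrams $\lambda_1\neq\lambda_2$ in $C(A,B)$ even though $\mathrm{KK}(\psi_1)=\mathrm{KK}(\psi_2)$, and this is resolved in Lemma \ref{different diagram homotopic} by using the isomorphism $\mathrm{KK}(A,B)\cong C(A,B)/M(A,B)$ of Theorem \ref{CM} to write $\lambda_2-\lambda_1=\lambda_\mu\in M(A,B)$ and then \emph{explicitly constructing}, for each matrix unit in $\mu$, a finite-dimensional-image homomorphism $\eta$ homotopic to a homomorphism $\eta_0$ inducing $c\kappa+\lambda_\mu$; (b) even with $\lambda_1=\lambda_2$ there remains the unitary twist of Lemma \ref{same diagram}, which is not just $u\oplus u^*\sim1\oplus1$: one first has to find a finite-dimensional unitary $W\in F_1'$ conjugating the two boundary maps $\zeta_2$ to $\zeta_1$ (using that they induce the same $\lambda_0$), invoke Proposition \ref{homotopy lemma} to contract $\varphi_i'(W)^*u(i)v(i)^*$ within the relative commutant, splice to get a genuine unitary $w\in U(B)$, and only then run the $w\oplus w^*$ trick. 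Your proposal names the right obstructions but does not supply the constructions that dispose of them, nor does it verify that the auxiliary homomorphisms being adjoined actually have finite-dimensional image --- the claim in bold in the statement, which is proved in the paper precisely by the explicit $\eta_0\sim_h\eta$ homotopy in Lemma \ref{different diagram homotopic}.
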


This paper is organized as follows.
In Section 2 below, we  collect some basic notions and tools which we will use later.
In Section 3, we show that each homomorphism between two Elliott-Thomsen algebras could be homotopic to a
homomorphism with $m$-standard form for some positive integer $m$.
In Section 4, we prove our main result, and conclude with the consequence that
every one-dimensional NCCW complex has property (H).

\section{Preliminaries}
We first list some basic notions and tools we will use later, which also help us to understand the examples more clearly.

\begin{notion}[Section 3 of \cite{GLN}]\rm
For $A=A(F_1,F_2,\varphi_0,\varphi_1)\in\mathcal{C}$ with $K_0(F_1)={\mathbb{Z}}^p$ and   $K_0(F_2)={\mathbb{Z}}^l$, consider the short exact sequence
$$
0\rightarrow SF_2 \xrightarrow{\iota} A \xrightarrow{\pi} F_1\rightarrow0,
$$
where $SF_2=C_0(0,1)\otimes F_2$ is the suspension of $F_2$, $\iota$ is the embedding map, and $\pi(f,a)=a,\,(f,a)\in A$ .
Then one has the six-term exact sequence
$$
0\to \mathrm{K}_0(A)\xrightarrow{\pi_*} \mathrm{K}_0(F_1)\xrightarrow{\partial} \mathrm{K}_0(F_2)\xrightarrow{\iota_*} \mathrm{K}_1(A)\to 0,
$$
where $\partial=\alpha-\beta$, and $\alpha$, $\beta$ are the matrices (with entries $\alpha_{ij},\beta_{ij}\in\mathbb{N},i=1,\cdots,l,j=1,\cdots,p$) corresponding to the maps $\mathrm{K}_0(\varphi_0),\,\mathrm{K}_0(\varphi_1):K_0(F_1)\rightarrow K_0(F_2)$, respectively.
Hence,
$$
\mathrm{K}_0(A)=\ker(\alpha -\beta)\subset \mathbb{Z}^p,\quad
\mathrm{K}_1(A)=\mathbb{Z}^l /{\rm Im}(\alpha - \beta),
$$
and $$\mathrm{K}_0^+ (A)=\ker(\alpha - \beta)\cap \mathrm{K}_0^+ (F_1).$$

\end{notion}
\begin{notion}\label{notation of A B}
Throughout this paper, when talking about $\mathrm{KK}(A,B)$ with $A,\,B\in\mathcal{C}$,
we shall assume the notational convention
that
$$A=A(F_1,F_2,\varphi_0,\varphi_1),\,B=B(F_1',F_2',\varphi_0 ',\varphi_1 '),$$
with $$F_1= \bigoplus_{i=1}^p M_{k_i}(\mathbb{C}),\quad
F_2= \bigoplus_{j=1}^l M_{h_j}(\mathbb{C}),$$
and
$$
F_1 '= \bigoplus_{i'=1}^{p'} M_{{k'}_{i'}}(\mathbb{C}),\quad
F_2 '= \bigoplus_{j'=1}^{l'} M_{{h'}_{j'}}(\mathbb{C}).$$
And we will use $\alpha,\,\beta,\,\alpha'$, and $\beta'$ to denote the matrices induced by
$\varphi_{0*},\,\varphi_{1*},\,\varphi_{0*}'$ and $\varphi_{1*}'$, respectively.
\end{notion}
The following Lemma comes from the Remark 2.1 and Remark 2.2 in \cite{AE}.
\begin{lemma}\label{basichom}
Let $A,B\in \mathcal{C}$ and let $\phi:\,A\to B$ be a homomorphism. Then $\phi$ is homotopic to a new homomorphism $\psi:\,A\to B$ with
$
Sp(\pi_e\circ\psi)\subset Sp(F_1),
$
where $\pi_e$ is the natural map $\pi_e:\,B\to F_2'$.
\end{lemma}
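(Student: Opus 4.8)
\emph{Plan.} The plan is to use the standard fibered description of homomorphisms into an Elliott--Thomsen algebra. Composing $\phi$ with the two canonical projections of $B\subseteq C([0,1],F_2')\oplus F_1'$ presents it as a pair: a homomorphism $\lambda:=\pi_e\circ\phi\colon A\to F_1'$ together with a point-norm continuous path $\{\Phi_t\}_{t\in[0,1]}$ of homomorphisms $A\to F_2'$ obeying the gluing conditions $\Phi_0=\varphi_0'\circ\lambda$ and $\Phi_1=\varphi_1'\circ\lambda$; conversely every such pair assembles to a homomorphism $A\to B$, and homotopies of $\phi$ correspond to homotopies of the data. Since a homomorphism $\lambda\colon A\to F_1'$ has $Sp(\lambda)\subseteq Sp(F_1)$ precisely when it annihilates the ideal $SF_2\triangleleft A$, proving the lemma reduces to homotoping $\phi$ so that its $F_1'$-part factors through $\pi\colon A\to F_1$ (equivalently, $\psi(SF_2)\subseteq SF_2'$).

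The main step is to deform $\lambda$ itself. Decomposing $\lambda$ into irreducible subrepresentations of $A$, each summand is either a point of $Sp(F_1)$, which I leave fixed, or an interior evaluation $\pi_{t,j}\colon(f,a)\mapsto f(t)_j$ with $t\in(0,1)$; for the latter the path $r\mapsto\pi_{(1-r)t,\,j}$ is a point-norm continuous path of homomorphisms $A\to M_{h_j}$, all of dimension $h_j$, ending as $r\to1$ at the evaluation $(f,a)\mapsto\varphi_0(a)_j$, which factors through $\pi$. Performing all of these deformations at once yields a point-norm continuous path of homomorphisms $\lambda_r\colon A\to F_1'$ with $\lambda_0=\lambda$ and with $\lambda_1$ annihilating $SF_2$.

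Finally I would lift $\{\lambda_r\}$ to a homotopy of $\phi$ by the usual ``path with moving endpoints'' trick: the original path $\{\Phi_t\}_t$ runs from $\varphi_0'\circ\lambda$ to $\varphi_1'\circ\lambda$, and inserting into shrinking end-collars of $[0,1]$ the paths $r'\mapsto\varphi_0'\circ\lambda_{r'}$ (traversed backwards) and $r'\mapsto\varphi_1'\circ\lambda_{r'}$ produces, for each $r$, a path $\{\Phi^r_t\}_t$ from $\varphi_0'\circ\lambda_r$ to $\varphi_1'\circ\lambda_r$ with $\{\Phi^0_t\}_t=\{\Phi_t\}_t$; then $\phi_r:=(\{\Phi^r_t\}_t,\lambda_r)$ is a homotopy of $\phi$ landing at a $\psi$ with $\pi_e\circ\psi=\lambda_1$, as wanted. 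The heart of the argument is the middle step, namely that every homomorphism from $A$ into a finite-dimensional algebra is homotopic to one factoring through $\pi\colon A\to F_1$; this rests on the geometry of $Sp(A)$ --- each point of $Sp(F_1)$ lies in the closure of the open-interval stratum $(0,1)\times\{1,\dots,l\}$, with $\varphi_0,\varphi_1$ dictating how interior evaluations degenerate at the ends, so interior spectrum can always be dragged to the boundary without leaving the space of homomorphisms. The subsidiary care is in the lift: one must keep the whole family inside $B$, which is why the deformation is set up so that the boundary values stay equal to $\varphi_0'\circ\lambda_r$ and $\varphi_1'\circ\lambda_r$ for every $r$, so that the relations $f(0)=\varphi_0'(a),\ f(1)=\varphi_1'(a)$ defining $B$ are never broken.
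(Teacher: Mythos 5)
The paper does not actually supply a proof of this lemma; it cites Remarks~2.1 and~2.2 of the reference \cite{AE}, so there is no paper-internal argument to compare against. Your proof is correct and self-contained, and it is the argument one would expect to underlie that citation.

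To confirm the key steps: a homomorphism $\phi\colon A\to B$ is equivalent to the pair $(\lambda,\{\Phi_t\}_{t\in[0,1]})$ with $\lambda\colon A\to F_1'$, $\Phi_t\colon A\to F_2'$ a point-norm continuous path, and the gluing conditions $\Phi_0=\varphi_0'\circ\lambda$, $\Phi_1=\varphi_1'\circ\lambda$. Fixing once a unitary in $F_1'$ that diagonalises $\lambda$ into irreducibles, each interior summand $\pi_{t,j}\colon(f,a)\mapsto f(t)_j$ has image in the same $h_j\times h_j$ corner as $t$ runs to $0$, and the limit $\pi_{0,j}=(\,(f,a)\mapsto\varphi_0(a)_j\,)$ is no longer irreducible but is still a homomorphism into that same corner of the same dimension and factors through $\pi\colon A\to F_1$; point-norm continuity is immediate because elements of $A$ are continuous $F_2$-valued functions. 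Pushing all interior summands simultaneously gives the path $\lambda_r$, and your ``shrinking end-collar'' construction of $\Phi^r_t$ restores the gluing constraints for every $r$, so $\phi_r=(\{\Phi^r_t\}_t,\lambda_r)$ is a genuine homotopy inside $B$ landing on a $\psi$ with $\pi'\circ\psi=\lambda_1$ factoring through $F_1$.

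One small remark on the statement: the lemma is written with the ``natural map $\pi_e\colon B\to F_2'$'', whereas you work with the canonical projection $\pi'\colon B\to F_1'$, $(f,a)\mapsto a$. Since every reading of $\pi_e$ that lands in $F_2'$ factors as $\varphi_0'\circ\pi'$ or $\varphi_1'\circ\pi'$ (evaluation at an endpoint), $Sp(\pi_e\circ\psi)\subseteq Sp(\pi'\circ\psi)$, so controlling $Sp(\pi'\circ\psi)$ as you do is the stronger conclusion and covers either reading. It is also exactly the form used later in the paper (e.g.\ in the proof of Theorem~\ref{homo to m-standard}), so your interpretation is the operative one.
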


\begin{notion}[\cite{AE}]\label{cmset}
Let $A,\,B\in \mathcal{C}$.
Denote by $C(A,B)$ the set of all the commutative diagrams
  $$
\xymatrixcolsep{2pc}
\xymatrix{
{\,\,0\,\,} \ar[r]^-{}
& {\,\,\mathrm{K}_0(A)\,\,} \ar[d]_-{\lambda_{0*}} \ar[r]^-{\pi_*}
& {\,\,\mathrm{K}_0(F_1)\,\,} \ar[d]_-{\lambda_{0}} \ar[r]^-{\alpha-\beta}
& {\,\,\mathrm{K}_1(SF_2)\,\,} \ar[d]_-{\lambda_{1}} \ar[r]^-{\iota_*}
& {\,\,\mathrm{K}_1(A)\,\,} \ar[d]_-{\lambda_{1*}} \ar[r]^-{}
& {\,\,0\,\,}\\
{\,\,0\,\,} \ar[r]^-{}
& {\,\,\mathrm{K}_0(B)\,\,} \ar[r]_-{\pi_*'}
& {\,\,\mathrm{K}_0(F_1') \,\,} \ar[r]_-{\alpha'-\beta'}
& {\,\,\mathrm{K}_1(SF_2') \,\,} \ar[r]_-{\iota_*'}
& {\,\,\mathrm{K}_1(B)\,\,} \ar[r]^-{}
& {\,\,0\,\,},}
$$
and
by $M(A,B)$ the subset of $C(A,B)$ of all the commutative diagrams
$$
\xymatrixcolsep{2pc}
\xymatrix{
{\,\,0\,\,} \ar[r]^-{}
& {\,\,\mathrm{K}_0(A)\,\,} \ar[d]_-{0} \ar[r]^-{\pi_*}
& {\,\,\mathrm{K}_0(F_1)\,\,} \ar[d]_-{\mu_{0}} \ar[r]^-{\alpha-\beta}
& {\,\,\mathrm{K}_1(SF_2)\,\,} \ar[d]_-{\mu_{1}} \ar[r]^-{\iota_*}
& {\,\,\mathrm{K}_1(A)\,\,} \ar[d]_-{0} \ar[r]^-{}
& {\,\,0\,\,}\\
{\,\,0\,\,} \ar[r]^-{}
& {\,\,\mathrm{K}_0(B)\,\,} \ar[r]_-{\pi_*'}
& {\,\,\mathrm{K}_0(F_1') \,\,} \ar[r]_-{\alpha'-\beta'}
& {\,\,\mathrm{K}_1(SF_2') \,\,} \ar[r]_-{\iota_*'}
& {\,\,\mathrm{K}_1(B)\,\,} \ar[r]^-{}
& {\,\,0\,\,}}
$$
such that there exists $\mu\in {\rm Hom}(\mathrm{K}_1(SF_2), \mathrm{K}_0(F_1 '))$ satisfying $\mu_0=\mu \circ(\alpha-\beta)$, $\mu_1=(\alpha'-\beta')\circ \mu$. Since such a diagram is completely determined by $\mu$, we may denote it by $\lambda_\mu$.
\end{notion}
\begin{notion}\rm
For two commutative diagrams $\lambda_I,\,\lambda_{II}\in C(A,B)$,
$$
\xymatrixcolsep{2pc}
\xymatrix{
{\,\,0\,\,} \ar[r]^-{}
& {\,\,\mathrm{K}_0(A)\,\,} \ar[d]_-{\lambda_{I0*}} \ar[r]^-{\pi_*}
& {\,\,\mathrm{K}_0(F_1)\,\,} \ar[d]_-{\lambda_{I0}} \ar[r]^-{\alpha-\beta}
& {\,\,\mathrm{K}_1(SF_2)\,\,} \ar[d]_-{\lambda_{I1}} \ar[r]^-{\iota_*}
& {\,\,\mathrm{K}_1(A)\,\,} \ar[d]_-{\lambda_{I1*}} \ar[r]^-{}
& {\,\,0\,\,}\\
{\,\,0\,\,} \ar[r]^-{}
& {\,\,\mathrm{K}_0(B)\,\,} \ar[r]_-{\pi_*'}
& {\,\,\mathrm{K}_0(F_1') \,\,} \ar[r]_-{\alpha'-\beta'}
& {\,\,\mathrm{K}_1(SF_2') \,\,} \ar[r]_-{\iota_*'}
& {\,\,\mathrm{K}_1(B)\,\,} \ar[r]^-{}
& {\,\,0\,\,}}
$$
and
$$
\xymatrixcolsep{2pc}
\xymatrix{
{\,\,0\,\,} \ar[r]^-{}
& {\,\,\mathrm{K}_0(A)\,\,} \ar[d]_-{\lambda_{II0*}} \ar[r]^-{\pi_*}
& {\,\,\mathrm{K}_0(F_1)\,\,} \ar[d]_-{\lambda_{II0}} \ar[r]^-{\alpha-\beta}
& {\,\,\mathrm{K}_1(SF_2)\,\,} \ar[d]_-{\lambda_{II1}} \ar[r]^-{\iota_*}
& {\,\,\mathrm{K}_1(A)\,\,} \ar[d]_-{\lambda_{II1*}} \ar[r]^-{}
& {\,\,0\,\,}\\
{\,\,0\,\,} \ar[r]^-{}
& {\,\,\mathrm{K}_0(B)\,\,} \ar[r]_-{\pi_*'}
& {\,\,\mathrm{K}_0(F_1') \,\,} \ar[r]_-{\alpha'-\beta'}
& {\,\,\mathrm{K}_1(SF_2') \,\,} \ar[r]_-{\iota_*'}
& {\,\,\mathrm{K}_1(B)\,\,} \ar[r]^-{}
& {\,\,0\,\,},}
$$
define the sum of $\lambda_I$ and $\lambda_{II}$ as
$$
\xymatrixcolsep{2pc}
\xymatrix{
{\,\,0\,\,} \ar[r]^-{}
&{\,\,\mathrm{K}_0(A)\,\,} \ar[d]_-{\lambda_{I0*}+\lambda_{II0*}} \ar[r]^-{\pi_*}
& {\,\,\mathrm{K}_0(F_1)\,\,} \ar[d]_-{\lambda_{I0}+\lambda_{II0}} \ar[r]^-{\alpha-\beta}
& {\,\,\mathrm{K}_1(SF_2)\,\,} \ar[d]_-{\lambda_{I1}+\lambda_{II1}} \ar[r]^-{\iota_*}
& {\,\,\mathrm{K}_1(A)\,\,} \ar[d]_-{\lambda_{I1*}+\lambda_{II1*}} \ar[r]^-{}
& {\,\,0\,\,}\\
{\,\,0\,\,} \ar[r]^-{}
&{\,\,\mathrm{K}_0(B)\,\,} \ar[r]_-{\pi_*'}
& {\,\,\mathrm{K}_0(F_1') \,\,} \ar[r]_-{\alpha'-\beta'}
& {\,\,\mathrm{K}_1(SF_2') \,\,} \ar[r]_-{\iota_*'}
& {\,\,\mathrm{K}_1(B)\,\,} \ar[r]^-{}
& {\,\,0\,\,}.}
$$
Note that $\lambda_I+\lambda_{II}\in C(A,B)$.
The diagram
$$
\xymatrixcolsep{2pc}
\xymatrix{
{\,\,0\,\,} \ar[r]^-{}
&{\,\,\mathrm{K}_0(A)\,\,} \ar[d]_-{0} \ar[r]^-{\pi_*}
& {\,\,\mathrm{K}_0(F_1)\,\,} \ar[d]_-{0} \ar[r]^-{\alpha-\beta}
& {\,\,\mathrm{K}_1(SF_2)\,\,} \ar[d]_-{0} \ar[r]^-{\iota_*}
& {\,\,\mathrm{K}_1(A)\,\,} \ar[d]_-{0} \ar[r]^-{}
& {\,\,0\,\,}\\
{\,\,0\,\,} \ar[r]^-{}
&{\,\,\mathrm{K}_0(B)\,\,} \ar[r]_-{\pi_*'}
& {\,\,\mathrm{K}_0(F_1') \,\,} \ar[r]_-{\alpha'-\beta'}
& {\,\,\mathrm{K}_1(SF_2') \,\,} \ar[r]_-{\iota_*'}
& {\,\,\mathrm{K}_1(B)\,\,} \ar[r]^-{}
& {\,\,0\,\,},}
$$
to be denoted by 0, is the (unique) zero element of $C(A,B)$.
(Clearly, $\lambda+0=\lambda$ for $\lambda\in C(A,B)$.)

Given a commutative diagram $\lambda\in C(A,B)$,
$$
\xymatrixcolsep{2pc}
\xymatrix{
{\,\,0\,\,} \ar[r]^-{}
& {\,\,\mathrm{K}_0(A)\,\,} \ar[d]_-{\lambda_{0*}} \ar[r]^-{\pi_*}
& {\,\,\mathrm{K}_0(F_1)\,\,} \ar[d]_-{\lambda_{0}} \ar[r]^-{\alpha-\beta}
& {\,\,\mathrm{K}_1(SF_2)\,\,} \ar[d]_-{\lambda_{1}} \ar[r]^-{\iota_*}
& {\,\,\mathrm{K}_1(A)\,\,} \ar[d]_-{\lambda_{1*}} \ar[r]^-{}
& {\,\,0\,\,}\\
{\,\,0\,\,} \ar[r]^-{}
& {\,\,\mathrm{K}_0(B)\,\,} \ar[r]_-{\pi_*'}
& {\,\,\mathrm{K}_0(F_1') \,\,} \ar[r]_-{\alpha'-\beta'}
& {\,\,\mathrm{K}_1(SF_2') \,\,} \ar[r]_-{\iota_*'}
& {\,\,\mathrm{K}_1(B)\,\,} \ar[r]^-{}
& {\,\,0\,\,},}
$$
the inverse of $\lambda$, to be denoted by $-\lambda$, is
$$
\xymatrixcolsep{2pc}
\xymatrix{
{\,\,0\,\,} \ar[r]^-{}
& {\,\,\mathrm{K}_0(A)\,\,} \ar[d]_-{-\lambda_{0*}} \ar[r]^-{\pi_*}
& {\,\,\mathrm{K}_0(F_1)\,\,} \ar[d]_-{-\lambda_{0}} \ar[r]^-{\alpha-\beta}
& {\,\,\mathrm{K}_1(SF_2)\,\,} \ar[d]_-{-\lambda_{1}} \ar[r]^-{\iota_*}
& {\,\,\mathrm{K}_1(A)\,\,} \ar[d]_-{-\lambda_{1*}} \ar[r]^-{}
& {\,\,0\,\,}\\
{\,\,0\,\,} \ar[r]^-{}
& {\,\,\mathrm{K}_0(B)\,\,} \ar[r]_-{\pi_*'}
& {\,\,\mathrm{K}_0(F_1') \,\,} \ar[r]_-{\alpha'-\beta'}
& {\,\,\mathrm{K}_1(SF_2') \,\,} \ar[r]_-{\iota_*'}
& {\,\,\mathrm{K}_1(B)\,\,} \ar[r]^-{}
& {\,\,0\,\,}.}
$$
Note that $-\lambda\in C(A,B)$, and $\lambda+(-\lambda)=0$.

Then $C(A,B)$ is an Abelian group, and $M(A,B)$ is a subgroup of $C(A,B)$.
\end{notion}

\begin{theorem}[\cite{AE}]\label{CM}
Let $A$$,\,B \in \mathcal{C}$. Then we have a natural isomorphism of groups
$$
\mathrm{KK}(A,B)\cong C(A,B)/M(A,B).
$$
\end{theorem}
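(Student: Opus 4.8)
\emph{Proof proposal.} The plan is to compute both sides from the six-term exact sequences attached to the defining extensions $0\to SF_2\xrightarrow{\iota}A\xrightarrow{\pi}F_1\to0$ and $0\to SF_2'\to B\to F_1'\to0$, show that both are extensions of the \emph{same} kernel by the \emph{same} cokernel, and check that the geometrically natural map intertwines the two. For $i=0,1$ write $\rho_i\colon \mathrm{Hom}(K_0F_2,K_iB)\to\mathrm{Hom}(K_0F_1,K_iB)$ for precomposition with $\alpha-\beta$. Applying the contravariant functor $\mathrm{KK}(-,B)$ to the extension for $A$ gives a Kasparov six-term sequence; since $K_*(F_1),K_*(F_2)$ are free, the UCT identifies $\mathrm{KK}(F_1,B)=\mathrm{Hom}(K_0F_1,K_0B)$, $\mathrm{KK}^1(F_1,B)=\mathrm{Hom}(K_0F_1,K_1B)$, $\mathrm{KK}(SF_2,B)=\mathrm{Hom}(K_0F_2,K_1B)$ and $\mathrm{KK}^1(SF_2,B)=\mathrm{Hom}(K_0F_2,K_0B)$, while the connecting maps are Kasparov multiplication by the class of the extension in $\mathrm{KK}^1(F_1,SF_2)\cong\mathrm{KK}(F_1,F_2)\cong\mathrm{Hom}(K_0F_1,K_0F_2)$, which is exactly $\alpha-\beta$ — the same fact underlying ``$\partial=\alpha-\beta$'' in the $\mathrm{K}$-theory sequence. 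Hence the connecting maps become $\rho_0$ and $\rho_1$, and the six-term sequence collapses to
$$0\longrightarrow \mathrm{coker}(\rho_0)\xrightarrow{\ \pi^*\ }\mathrm{KK}(A,B)\xrightarrow{\ \iota^*\ }\ker(\rho_1)\longrightarrow 0.$$

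\emph{The group $C(A,B)/M(A,B)$.} From exactness of $0\to K_0A\xrightarrow{\pi_*}K_0F_1\xrightarrow{\alpha-\beta}K_0F_2\xrightarrow{\iota_*}K_1A\to0$ (so $K_0A=\ker(\alpha-\beta)$, $K_1A=\mathrm{coker}(\alpha-\beta)$), and likewise for $B$, a diagram in $C(A,B)$ is determined by its two middle verticals $(\lambda_0,\lambda_1)$ subject only to $(\alpha'-\beta')\lambda_0=\lambda_1(\alpha-\beta)$: commutativity of the middle square forces $\lambda_0(K_0A)\subseteq K_0B$ and $\lambda_1(\mathrm{Im}(\alpha-\beta))\subseteq\mathrm{Im}(\alpha'-\beta')$, so $\lambda_{0*},\lambda_{1*}$ are the induced restriction and quotient maps and the outer squares commute automatically; and $M(A,B)=\{(\mu(\alpha-\beta),(\alpha'-\beta')\mu):\mu\in\mathrm{Hom}(K_0F_2,K_0F_1')\}$. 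The assignment $(\lambda_0,\lambda_1)\mapsto\iota_*'\lambda_1\in\mathrm{Hom}(K_0F_2,K_1B)$ lands in $\ker(\rho_1)$ by the constraint, is onto (lift along the free groups $K_0F_2$, $K_0F_1$), and its kernel reduced modulo $M(A,B)$ is computed, by the same liftings, to be $\mathrm{coker}(\rho_0)$. Thus $C(A,B)/M(A,B)$ sits in the identical sequence $0\to\mathrm{coker}(\rho_0)\to C(A,B)/M(A,B)\to\ker(\rho_1)\to0$.

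\emph{The comparison map.} To identify the two extensions I build the natural map $C(A,B)\to\mathrm{KK}(A,B)$ directly. Realize $\lambda_0,\lambda_1$ by classes $\kappa_1\in\mathrm{KK}(F_1,F_1')$, $\kappa_2\in\mathrm{KK}(F_2,F_2')$ (possible since $\mathrm{KK}$ of finite-dimensional algebras is $\mathrm{Hom}$ of $K_0$); the constraint then reads exactly $\kappa_2\big(\mathrm{KK}(\varphi_0)-\mathrm{KK}(\varphi_1)\big)=\big(\mathrm{KK}(\varphi_0')-\mathrm{KK}(\varphi_1')\big)\kappa_1$. Since the two extensions present $A$ and $B$, up to suspension, as the mapping cones in $\mathrm{KK}$ of $\mathrm{KK}(\varphi_0)-\mathrm{KK}(\varphi_1)$ and $\mathrm{KK}(\varphi_0')-\mathrm{KK}(\varphi_1')$, the pair $(\kappa_1,\kappa_2)$ extends (non-uniquely) to a morphism of distinguished triangles, producing $\kappa\in\mathrm{KK}(A,B)$; one checks the indeterminacy of this extension is exactly the image of $M(A,B)$, so the assignment descends to a homomorphism $C(A,B)/M(A,B)\to\mathrm{KK}(A,B)$. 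This map is compatible with the two surjections onto $\ker(\rho_1)$ and induces the identity on the subgroup $\mathrm{coker}(\rho_0)$, so it is an isomorphism by the five lemma. That it is the \emph{natural} isomorphism, sending $\mathrm{KK}(\psi)$ to the diagram $\lambda_\psi$ induced by a homomorphism $\psi$, follows because — after homotoping $\psi$ into a form respecting the sub-homogeneous structure as in Lemma~\ref{basichom} and Section~3 — $\psi$ becomes a morphism of the two defining extensions and so induces $\lambda_\psi$ on their six-term sequences.

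\emph{Expected main obstacle.} The two six-term computations and the five-lemma comparison are routine diagram chases with free abelian groups. The substance lies in the comparison step: producing a \emph{well-defined, natural} map $C(A,B)\to\mathrm{KK}(A,B)$. This rests on (i) correctly identifying the connecting/index class with $\alpha-\beta$; (ii) the functoriality of the mapping-cone (distinguished-triangle) construction in $\mathrm{KK}$ — existence, though not uniqueness, of a morphism of triangles refining $(\kappa_1,\kappa_2)$; and (iii) pinning the resulting indeterminacy down to be \emph{exactly} $M(A,B)$ and no larger, which is precisely where the UCT and the definition of $M(A,B)$ through the factoring homomorphism $\mu$ are needed to make the count come out right.
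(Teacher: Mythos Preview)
This theorem is not proved in the present paper: it is quoted from \cite{AE} and used here as a black box. There is therefore no proof in this paper to compare your proposal against.

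That said, your outline is a sound route to the result. The six-term computation placing $\mathrm{KK}(A,B)$ into the short exact sequence $0\to\mathrm{coker}(\rho_0)\to\mathrm{KK}(A,B)\to\ker(\rho_1)\to0$ is correct (the extension is semisplit since $F_1$ is nuclear, and the UCT identifications for $F_1,F_2$ are immediate since their $K$-theory is free and concentrated in degree zero). Your diagram chase showing that $C(A,B)/M(A,B)$ fits into the identical short exact sequence is also correct; the freeness of $K_0F_1$ and $K_0F_2$ makes the successive liftings work exactly as you describe, and one checks easily that $M(A,B)$ lies in the kernel of the map to $\ker(\rho_1)$. The one step that genuinely requires more than a sketch is the one you yourself flag as the main obstacle: verifying that the indeterminacy in choosing the third arrow of a morphism of distinguished triangles is \emph{exactly} $M(A,B)$, neither more nor less, so that the comparison map is well defined and induces the identity on both $\mathrm{coker}(\rho_0)$ and $\ker(\rho_1)$. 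That is where the actual content of the theorem lies; in \cite{AE} this is handled by a direct construction of the map and explicit bookkeeping rather than by abstract triangulated-category language, but the underlying idea is the same.
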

%In this paper, we denote $\chi:\mathrm{KK}(A,B)\rightarrow C(A,B)/M(A,B)$ the natural isomorphism,
%we will use $\mathrm{KK}(\lambda)$ to denote the $\mathrm{KK}$-element $\chi^{-1}(\lambda+M(A,B))\in\mathrm{KK}(A,B)$.
\begin{lemma}[Lemma 2.5 of \cite{AE}]\label{unitary}
Let $A,\,B \in \mathcal{C}$ be minimal. Let $\lambda$ be a commutative diagram,
$$
\xymatrixcolsep{2pc}
\xymatrix{
{\,\,0\,\,} \ar[r]^-{}
& {\,\,\mathrm{K}_0(A)\,\,} \ar[d]_-{\lambda_{0*}} \ar[r]^-{\pi_*}
& {\,\,\mathrm{K}_0(F_1)\,\,} \ar[d]_-{\lambda_{0}} \ar[r]^-{\alpha-\beta}
& {\,\,\mathrm{K}_1(SF_2)\,\,} \ar[d]_-{\lambda_{1}} \ar[r]^-{\iota_*}
& {\,\,\mathrm{K}_1(A)\,\,} \ar[d]_-{\lambda_{1*}} \ar[r]^-{}
& {\,\,0\,\,}\\
{\,\,0\,\,} \ar[r]^-{}
& {\,\,\mathrm{K}_0(B)\,\,} \ar[r]_-{\pi_*'}
& {\,\,\mathrm{K}_0(F_1') \,\,} \ar[r]_-{\alpha'-\beta'}
& {\,\,\mathrm{K}_1(SF_2') \,\,} \ar[r]_-{\iota_*'}
& {\,\,\mathrm{K}_1(B)\,\,} \ar[r]^-{}
& {\,\,0\,\,},}
$$
where the map $\lambda_0$ is positive. Let
$$\tau=\bigoplus_{j'=1}^{l'} \tau_{j'}:\,A\to C([0,1],F_2')$$
be a homomorphism such that $Sp(\pi_0'\circ\tau),\,Sp(\pi_1'\circ\tau)\subset Sp(F_1)$ and
$$
\mathrm{K}_0({\pi_0'}\circ\tau)=\alpha'\circ\lambda_0
\quad
and
\quad
\mathrm{K}_0({\pi_1'}\circ\tau)=\beta'\circ\lambda_0,
$$
where $\pi_0'$ and $\pi_1'$ are the point evaluations at 0 and 1, respectively.
Then there exists a unitary $u\in{C([0,1],F_2)}$ such that ${\rm Ad}\,u\circ\tau$ gives a homomorphism from $A$ to $B$.
\end{lemma}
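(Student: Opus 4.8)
The plan is to strip $\tau$ down to its behaviour at the two endpoints of $[0,1]$, to recognise that behaviour as pulled back from $F_1$ along $\pi\colon A\to F_1$, to realise the $K$-theoretic datum $\lambda_0$ by a genuine $*$-homomorphism $F_1\to F_1'$, and finally to reconcile the two endpoints inside $F_2'$ by means of a path of unitaries, which exists because $U(F_2')$ is connected.

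First I would exploit the spectral hypotheses. As $SF_2$ is the kernel of $\pi$ and the quotient $A/SF_2\cong F_1$ is semisimple, any homomorphism out of $A$ whose spectrum is contained in $Sp(F_1)$ annihilates $SF_2$ and hence factors through $\pi$. Applying this to $\pi_0'\circ\tau$ and $\pi_1'\circ\tau$ (which take values in $F_2'$) yields unique $*$-homomorphisms $\bar\tau_0,\bar\tau_1\colon F_1\to F_2'$ with $\pi_0'\circ\tau=\bar\tau_0\circ\pi$ and $\pi_1'\circ\tau=\bar\tau_1\circ\pi$, and the $K_0$-hypotheses become $K_0(\bar\tau_0)=\alpha'\circ\lambda_0$ and $K_0(\bar\tau_1)=\beta'\circ\lambda_0$. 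Using that $\lambda_0$ is positive (and the dimension bounds discussed below), I would then pick a $*$-homomorphism $\rho\colon F_1\to F_1'$ with $K_0(\rho)=\lambda_0$. Since $\varphi_0'\circ\rho$ and $\bar\tau_0$ are homomorphisms $F_1\to F_2'$ inducing the same map $\alpha'\circ\lambda_0$ on $K_0$, they are unitarily equivalent in $F_2'$; fix $u_0\in U(F_2')$ with ${\rm Ad}\,u_0\circ\bar\tau_0=\varphi_0'\circ\rho$, and similarly $u_1\in U(F_2')$ with ${\rm Ad}\,u_1\circ\bar\tau_1=\varphi_1'\circ\rho$.

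To conclude, I would use that $U(F_2')=\prod_{j'}U(h'_{j'})$ is path-connected: choose a continuous path $t\mapsto u(t)\in U(F_2')$ with $u(0)=u_0$ and $u(1)=u_1$, view $u$ as a unitary of $C([0,1],F_2')$, and form ${\rm Ad}\,u\circ\tau$ (still a homomorphism into $C([0,1],F_2')$). Then for every $x\in A$ one has $({\rm Ad}\,u\circ\tau)(x)(0)={\rm Ad}\,u_0\big(\bar\tau_0(\pi(x))\big)=\varphi_0'\big(\rho(\pi(x))\big)$ and likewise $({\rm Ad}\,u\circ\tau)(x)(1)=\varphi_1'\big(\rho(\pi(x))\big)$; hence $\big({\rm Ad}\,u\circ\tau,\ \rho\circ\pi\big)$ takes values in $B\subseteq C([0,1],F_2')\oplus F_1'$ and is the sought homomorphism $A\to B$, with $C([0,1],F_2')$-part ${\rm Ad}\,u\circ\tau$ (only the endpoint values of $u$ are used, the path on $(0,1)$ being arbitrary).

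The step I expect to be the main obstacle is producing $\rho$, that is, upgrading the positive $K_0$-morphism $\lambda_0$ to an honest $*$-homomorphism $F_1\to F_1'$; this reduces to the inequalities $\sum_i(\lambda_0)_{i'i}k_i\le k'_{i'}$ for all $i'$. The leverage is that $\bar\tau_0$ and $\bar\tau_1$ genuinely exist as homomorphisms, so $\bar\tau_0(1_{F_1})$ and $\bar\tau_1(1_{F_1})$ are subprojections of $1_{F_2'}$; this gives $\sum_i(\alpha'\lambda_0)_{j'i}k_i\le h'_{j'}=\sum_{i'}\alpha'_{j'i'}k'_{i'}$ for each $j'$ and the analogous inequality for $\beta'$, and these, combined with the minimality of $B$ (which makes $\ker\alpha'\cap\ker\beta'=0$), yield the required bounds. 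For a unital $\tau$ — the case relevant to the main theorem — the inequalities are even equalities, so $\big(k'_{i'}-\sum_i(\lambda_0)_{i'i}k_i\big)_{i'}\in\ker\alpha'\cap\ker\beta'=0$, whence $\sum_i(\lambda_0)_{i'i}k_i=k'_{i'}$ and $\rho$ can be taken unital. The remaining ingredients — the spectral factorisation of the first step, the classification of homomorphisms between fixed finite-dimensional $C^*$-algebras up to unitary equivalence by their induced $K_0$-maps, and the connectedness of $U(F_2')$ — are routine.
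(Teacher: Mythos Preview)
The paper does not supply a proof of this lemma; it is quoted from \cite{AE} (their Lemma~2.5), so there is no in-paper argument to compare against. Your outline is the standard one and almost certainly coincides with the argument in \cite{AE}: factor the endpoint evaluations through $\pi$ via the spectral hypothesis, lift the positive $\lambda_0$ to an actual $*$-homomorphism $\rho\colon F_1\to F_1'$, use the $K_0$-classification of maps between finite-dimensional $\mathrm{C}^*$-algebras to produce unitaries $u_0,u_1\in U(F_2')$ intertwining $\bar\tau_0,\bar\tau_1$ with $\varphi_0'\circ\rho,\varphi_1'\circ\rho$, and connect $u_0$ to $u_1$ by a continuous path in the connected group $U(F_2')$. (The lemma as printed has $u\in C([0,1],F_2)$; this is a typo for $F_2'$, which you silently corrected.)

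The one place to tighten is the existence of $\rho$ in the non-unital case. Your inference ``$\alpha'd\ge 0$, $\beta'd\ge 0$, and $\ker\alpha'\cap\ker\beta'=0$ imply $d\ge 0$'' is not valid as a bare matrix fact (for instance $\alpha'=(2,1)$, $\beta'=(1,2)$, $d=(-1,3)$ has trivial common kernel yet $d\not\ge 0$), so that paragraph is not yet a proof. In the unital case your reasoning is sound: unitality of $\tau$ forces $\alpha'd=\beta'd=0$, whence $d\in\ker\alpha'\cap\ker\beta'$, and the vanishing of this kernel---which you attribute to minimality; do check this against the precise definition used in \cite{AE}---gives $d=0$ and a unital $\rho$. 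Since the only use of the lemma in the present paper (inside the proof of Lemma~\ref{different diagram homotopic}) is to a $\tau$ built by hand with the multiplicities already arranged so that the accompanying $\rho$ is explicit, the gap does not affect anything here; but you should either restrict your statement to unital $\tau$ or supply a separate argument for the general size bound.
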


\section{Perturbation and standard form}

\begin{definition}\label{defstd}
  If $A=A(F_1,F_2,\varphi_0,\varphi_1)\in\mathcal{C}$, we shall say that a homomorphism $\phi:A\rightarrow M_r(C[0,1])$ has standard form if:

(i) $\phi$ has the expression
$$
\phi_t(f)=
U(t)^*
\left(
\begin{array}{cccc}
  f(s_1(t)) &  &  &  \\
   & f(s_2(t)) &  &  \\
   &  & \ddots &  \\
   &  &  & f(s_k(t))
\end{array}
\right)U(t),\quad \forall\, f\in A, \quad (*)
$$
where $U\in M_r(C[0,1])$ and $\{s_i(t)\}_{i=1}^k\subset C([0,1],Sp(F_1))\cup C([0,1],Sp(SF_2))$.

(ii) Each of $\{s_i(t)\}_{i=1}^{k}$  has one of the following basic forms:
$$
\{\theta_1,\cdots,\theta_p,(t,1),\cdots,(t,l),(1-t,1),\cdots,(1-t,l)\},
$$
where $\theta_j$ is the $j^{th}$ point in $Sp(F_1)$ and $(t,m)$ denotes the point $t$ on the $m^{th}$ copy of the interval $(0,1)$ in $Sp(SF_2)$.
\end{definition}
\begin{definition}
  Let $n\in \mathbb{N}^{+}$, $A\in \mathcal{C}$ with $Sp(A)=Sp(F_1)\cup \coprod_{i=1}^{l}(0,1)_i$. Set $I_i^0=(0,\frac{1}{n}]_i,$$\cdots, I_i^r=[\frac{r-1}{n},\frac{r}{n}]_i,$$\cdots,I_i^{n-1}=[\frac{n-1}{n},1)_i$ for  $i=1,2,\cdots,l$. Then $Sp(A)=Sp(F_1)\cup \coprod_{i=1}^{l}\bigcup_{r=1}^{n}I_i^r$. We shall call this an $n$-partition of $Sp(A)$ and refer to $I_1^1,\cdots,I_i^r,\cdots,I_l^n$ as dividing intervals.
\end{definition}
\begin{definition}
 Let $A,B\in \mathcal{C}$ and let $\phi$ be a homomorphism from $A$ to $B$.  We shall say
 $\phi$ has $n$-standard form if after identifying each dividing interval of $Sp(B)$ with $[0,1]$, $\phi$ has standard form on each dividing interval.
\end{definition}

\begin{example}\label{piece}
Choose $A$ as in Example \ref{EX 1}. Define $\phi: A\rightarrow M_3(C[0,1])$ as follows:
$$
\phi(f,a\oplus b\oplus c)=
\begin{cases}
  \left(
  \begin{array}{cc}
    a &  \\
     & f(2t)
  \end{array}
  \right), & \mbox{if } 0\leq t \leq \frac{1}{2}; \\
  \left(
  \begin{array}{ccc}
    1 & 0 & 0 \\
    0 & 0 & 1 \\
    0 & 1 & 0
  \end{array}
  \right)
  \left(
  \begin{array}{cc}
    f(2t-1) &  \\
     & b
  \end{array}
  \right)
  \left(
    \begin{array}{ccc}
    1 & 0 & 0 \\
    0 & 0 & 1 \\
    0 & 1 & 0
  \end{array}
  \right)
  , & \mbox{if } \frac{1}{2}\leq t \leq 1.
\end{cases}
$$
Then $\phi$ has 2-standard form.
\end{example}

  The following result is essentially contained in Lemma 3.5 of \cite{Liu2}.
\begin{lemma}\label{appro3}
  Let $A\in \mathcal{C}$, $F\subset A$ be a finite subset, and $\delta>0$. There exist a finite subset $G$ and $\varepsilon>0$
  such that if $\psi,\varphi:A\rightarrow M_r(\mathbb{C})$ are homomorphisms, for some integer $r$, with
  $$
  \|\psi(g)-\varphi(g)\|<\varepsilon,\quad\forall\,g\in G,
  $$
  then there is a homomorphism $\phi:A\rightarrow M_r(C[0,1])$ such that $\phi_0=\psi$, $\phi_1=\varphi$, and
  $$
  \|\phi_t(f)-\psi(f)\|<\delta,\quad\forall\,f\in F,\,\,t\in [0,1].
  $$
  Moreover, $\phi$ may be chosen to be homotopic to a homomorphism with 3-standard form.
\end{lemma}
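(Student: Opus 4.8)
The plan is to use the fact that every $*$-homomorphism from $A$ into a matrix algebra is, up to unitary equivalence, a direct sum of point evaluations, and to convert $\varepsilon$-closeness of $\psi$ and $\varphi$ on a well-chosen finite set into a matching of the two spectral pictures. Recall that $Sp(A)=Sp(F_1)\sqcup\coprod_i(0,1)_i$ and that the evaluation of $A$ at the endpoint $0$ (resp.\ $1$) of the $i$-th open interval equals the $i$-th block of $\varphi_0$ (resp.\ $\varphi_1$) composed with $\pi\colon A\to F_1$; thus, as a representation, a spectral point sitting near such an endpoint accounts for a cluster of points of $Sp(F_1)$ with the multiplicities prescribed by $\alpha$ (resp.\ $\beta$), and such data represent homomorphisms that are genuinely close on every finite set. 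I would choose the data in the order $G$, then $\eta$, then $\varepsilon$: first an auxiliary scale $\eta>0$ (via uniform continuity on $[0,1]$ of the finitely many scalar matrix entries of the elements of $F$) so that points of $Sp(A)$ within $\eta$ evaluate to within $\delta/3$ under every $f\in F$; then a finite $G\supseteq F$ rich enough that the values $\psi(g)$, $g\in G$, pin down, up to a small error, the spectral multiset of $\psi$ together with its implementing unitary modulo the commutant of $\psi(A)$; then $\varepsilon>0$ small enough (and $\eta$ further shrunk relative to $G$) that $\max_{g\in G}\|\psi(g)-\varphi(g)\|<\varepsilon$ forces a pairing, in the cluster sense above, between the spectrum of $\psi$ and that of $\varphi$ with paired points within $\eta$, and forces $\varphi=\mathrm{Ad}\,w\circ\psi_0$ for a unitary $w$ with $2\|w-1\|(1+\max_{f\in F}\|f\|)<\delta/3$, where $\psi_0$ is obtained from $\psi$ by sliding its spectral points to those of $\varphi$ along the pairing (so $\psi_0$ keeps the implementing unitary of $\psi$ and shares the diagonal part of $\varphi$).

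Granting this, I would assemble $\phi$ from two short homotopies of homomorphisms $A\to M_r(\mathbb C)$. The first transports $\psi$ to $\psi_0$, moving the spectral points bunch by bunch along the pairing: each paired interior point is slid monotonically along its interval to its partner — a path that remains $\eta$-close to its start, hence $<\delta/3$ on $F$ — while a point of $Sp(F_1)$ paired with one near an interval endpoint is realised by sliding the latter into that endpoint, where, by the identification recalled above, it becomes precisely the prescribed $Sp(F_1)$-cluster; conjugating throughout by the common implementing unitary keeps this homotopy $<\delta/3$ on $F$. The second homotopy is $\mathrm{Ad}\,w_s\circ\psi_0$ for a unitary path $w_s$ from $1$ to $w$; it is $<\delta/3$ on $F$ and ends at $\varphi$. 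Concatenating and reparametrising gives $\phi\in\mathrm{Hom}(A,M_r(C[0,1]))$ with $\phi_0=\psi$, $\phi_1=\varphi$, and $\|\phi_t(f)-\psi(f)\|<\delta$ for all $f\in F$ and $t\in[0,1]$.

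For the last assertion I would invoke Lemma 3.5 of \cite{Liu2}, applied to $\phi$ regarded simply as a homomorphism $A\to M_r(C[0,1])$ with no smallness constraint: it is homotopic to one in $3$-standard form. The mechanism is that the two outer dividing intervals $(0,\tfrac13]$ and $[\tfrac23,1)$ serve to conjugate the endpoint homomorphisms $\phi_0,\phi_1$, by unitary paths, into diagonal direct sums of point evaluations — that is, into standard form — while the now freely deformable middle third interpolates the spectral data using only the basic coordinate functions of Definition \ref{defstd}(ii): constant points of $Sp(F_1)$, and full sweeps $(t,m)$ or $(1-t,m)$. The interpolation is possible because each of $[0,1]$ and each point $\theta_j$ is contractible, and because the endpoint identifications allow a spectral path running into an interval endpoint to be traded for a sweep. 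As noted in the text, this part of the statement is essentially contained in \cite{Liu2}.

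I expect the main obstacle to be the bookkeeping behind the choices in the first paragraph: showing that $\varepsilon$-closeness on a \emph{finite} set genuinely forces a good cluster-pairing of the two spectra together with a nearby implementing unitary. In contrast with the classical case of maps $C(X)\to M_r(\mathbb C)$, one must here track the non-Hausdorff points of $Sp(A)$ — an interior spectral point lying very close to an interval endpoint has to be allowed to pair with a whole bunch of $Sp(F_1)$-points carrying exactly the multiplicities dictated by $\alpha$ or $\beta$ — so getting $G$ and $\varepsilon$ to detect this robustly is the delicate point; the remaining assembly of short unitary and spectral homotopies is routine, and already appears in miniature in Examples \ref{EX 1}--\ref{EX 3}.
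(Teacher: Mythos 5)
Your first paragraph and the assembly of $\phi$ by concatenating a spectral-sliding homotopy with a short unitary homotopy is, in spirit, a reconstruction of what the paper simply cites from Lemma~3.5 of \cite{Liu2}; you are candid that the cluster-pairing claim (a finite $G$ and small $\varepsilon$ force a matching of the two spectral multisets, with the non-Hausdorff bunching at interval endpoints handled correctly) is the delicate point, and that is indeed exactly the content the paper delegates to \cite{Liu2}. So the first half of your argument is a reasonable sketch of the same underlying fact, just written out rather than cited, and I would not count it as a gap by itself.

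The genuine gap is in the ``Moreover'' clause. You propose to again invoke Lemma~3.5 of \cite{Liu2}, ``applied to $\phi$ \ldots with no smallness constraint,'' to conclude that $\phi$ is homotopic to a $3$-standard-form homomorphism. But that lemma does not say this: as the paper itself records, \cite{Liu2} only produces a $\phi$ that has the \emph{conjugated diagonal} expression $(*)$ piecewise on $[0,\tfrac13]$, $[\tfrac13,\tfrac23]$, $[\tfrac23,1]$, which is Definition~\ref{defstd}(i) but not (ii) — the spectral paths $s_k(t)$ are at this stage arbitrary continuous paths into $Sp(A)$ respecting the endpoint identifications, not the prescribed constants $\theta_j$ or sweeps $(t,m)$, $(1-t,m)$. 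The remaining work, which your sketch does not do, is precisely to deform each $s_k$ on each third to one of the basic forms, and the paper carries this out by a case analysis: it lists the ten possible qualitative behaviours of $s_k$ on a dividing interval (interior, running into one endpoint, running into both, round trips, crossings) and shows in each case how to retract $s_k$ to a basic form while preserving its boundary values on the third, so that the three pieces still glue to a homomorphism on $[0,1]$. Your replacement heuristic — ``outer thirds conjugate the endpoints into standard form, the middle third interpolates'' — mislocates the difficulty (the unitary $U(t)$ in $(*)$ is already present and is not the issue) and does not engage with the actual obstruction, namely turning arbitrary spectral paths into the allowed basic ones compatibly across the subdivision points. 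Without this step, the ``Moreover'' claim is asserted, not proved.
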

\begin{proof}
  The first part is included in \cite[Lemma 3.5]{Liu2} and the homomorphism $\phi$ has the expression $(*)$(condition (i) in  Definition \ref{defstd}) on $[0,\frac{1}{3}]$, $[\frac{1}{3},\frac{2}{3}]$ and $[\frac{2}{3},1]$. For the second part, consider the interval $[0,\frac{1}{3}]$, after identifying $[0,\frac{1}{3}]$ with $[0,1]$, for each $k$, we require that $s_k(t)$ satisfies one of the following conditions:

(1) $s_k(t)\in \{\theta_1,\cdots,\theta_p\}$;

(2) $Im s_k(t)\subset (0,1)_{i_k}$ for some $i_k\in\{1,2,\cdots,l\}$;

(3) $s_k(t)\in (0,1)_{i_k},(0\leq t<1)$ and $s_k(1)=0_{i_k}$ for some $i_k$;

(4) $s_k(t)\in (0,1)_{i_k},(0\leq t<1)$ and $s_k(1)=1_{i_k}$ for some $i_k$;

(5) $s_k(t)\in (0,1)_{i_k},(0<t\leq1)$ and $s_k(0)=0_{i_k}$ for some $i_k$;

(6) $s_k(t)\in (0,1)_{i_k},(0<t\leq1)$ and $s_k(0)=1_{i_k}$ for some $i_k$;

(7) $s_k(t)\in (0,1)_{i_k},(0< t<1)$ and $s_k(0)=s_k(1)=0_{i_k}$ for some $i_k$;

(8) $s_k(t)\in (0,1)_{i_k},(0<t<1)$ and $s_k(0)=s_k(1)=1_{i_k}$ for some $i_k$;

(9) $s_k(t)\in (0,1)_{i_k},(0<t<1)$ and $s_k(0)=0_{i_k},\ s_k(1)=1_{i_k}$ for some $i_k$;

(10) $s_k(t)\in (0,1)_{i_k},(0<t<1)$ and $s_k(0)=1_{i_k}$, $s_k(1)=0_{i_k}$ for some $i_k$.

Then we use the following basic techniques:

(a) If $s_k(t)$ satisfies (2), (3), (5) or (7), then $s_k(t)$ is homotopic to $\widetilde{s}_k(t)=0_{i_k}$;

(b) If $s_k(t)$ satisfies (4), (6) or (8), then $s_k(t)$ is homotopic to $\widetilde{s}_k(t)=1_{i_k}$;

(c) If $s_k(t)$ satisfies (9), then $s_k(t)$ is homotopic to $\widetilde{s}_k(t)=(t,i_k)$;

(d) If $s_k(t)$ satisfies (10), then $s_k(t)$ is homotopic to $\widetilde{s}_k(t)=(1-t,i_k)$.

Now we get a new homomorphism $\phi^{(1)}$ which has standard form on $[0,\frac{1}{3}]$. Note that after the homotopy step on the interval $[0,\frac{1}{3}]$, $\phi^{(1)}$ still has the expression $(*)$ on the interval $[\frac{1}{3},\frac{2}{3}]$. Then for each $k$, $s_k(t)$ still satisfys one of the ten conditions above, so we repeat the same techniques for $[\frac{1}{3},\frac{2}{3}]$ and $[\frac{2}{3},1]$, in this way, we obtain a homomorphism $\phi^{(3)}$ with 3-standard form. This shows that $\phi$ is homotopic to a homomorphism of $3$-standard form.

\end{proof}
 The following lemma is  \cite[Corollary 4.3]{B}.
\begin{lemma}\label{homolift}
Let $A$ be a semiprojective $\mathrm{C}^*$-algebra generated by a finite
or countable set $\mathcal{G}=\{x_1,x_2,\cdots\}$ with $\underset{j\rightarrow\infty}{\lim}\|x_j\|=0$ if $\mathcal{G}$ is infinite. Then
 there is a $\delta>0$ such that, whenever $B$ is a $\mathrm{C}^*$-algebra, and $\phi_0$ and
$\phi_1$ are homomorphisms from $A$ to $B$ with $\|\phi_0(x_j)-\phi_1(x_j) \|<\delta$ for all $j$, then $\phi_0$ and $\phi_1$ are homotopic. (The number $\delta$ depends on $A$ and the set $\mathcal{G}$ of generators, but not on $B$ or the maps $\phi_0,\phi_1$.)
\end{lemma}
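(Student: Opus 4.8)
Since this lemma is quoted verbatim from \cite[Corollary 4.3]{B}, in the body of the paper I would simply cite that reference; what follows is the argument I would reconstruct if pressed. The plan is to argue by contradiction, turning the statement into a lifting problem of the kind the defining property of semiprojectivity handles. Suppose no such $\delta$ existed. Then for each $n$ there would be a C*-algebra $B_n$ and homomorphisms $\phi_0^{(n)},\phi_1^{(n)}:A\to B_n$ with $\|\phi_0^{(n)}(x_j)-\phi_1^{(n)}(x_j)\|<1/n$ for all $j\le n$ (since $\|x_j\|\to 0$, this simultaneously bounds the finitely many $j$ with $\|x_j\|\ge 1/n$), yet with $\phi_0^{(n)}\nsim_h\phi_1^{(n)}$. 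Put $B=\prod_n B_n$, let $I=\bigoplus_n B_n$ be the $c_0$-direct sum, and let $I_N=\{(b_n)\in B:b_n=0\text{ for }n\ge N\}$; these form an increasing sequence of closed ideals with $\overline{\bigcup_N I_N}=I$. The diagonal homomorphisms $\Phi_i:=(\phi_i^{(n)})_n:A\to B$ satisfy $\Phi_0(x_j)-\Phi_1(x_j)\in I$ for every $j$, hence, $\{x_j\}$ being a generating set, they descend to one and the same homomorphism $\Psi:A\to B/I$.

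Next I would pass to the path algebra $C:=C([0,1],B)$ with the increasing ideals $J_N:=C([0,1],I_N)$, noting that $\overline{\bigcup_N J_N}=C([0,1],I)=:J$ and, routinely, $C/J\cong C([0,1],B/I)$ and $C/J_N\cong C([0,1],B/I_N)=C([0,1],\prod_{n\ge N}B_n)$. The constant path $\widetilde\Psi:A\to C([0,1],B/I)$, $\widetilde\Psi(a)(t)=\Psi(a)$, is a homomorphism into $C/J$, so semiprojectivity of $A$ yields an $N$ and a homomorphism $H:A\to C/J_N$ with $H$ lifting $\widetilde\Psi$. Reading $H$ off in each coordinate $m\ge N$ gives a homotopy $H^{(m)}:A\to C([0,1],B_m)$ whose endpoints agree with $\phi_0^{(m)},\phi_1^{(m)}$ \emph{modulo the vanishing ideal}, i.e. $\|H^{(m)}_i(a)-\phi_i^{(m)}(a)\|\to 0$ as $m\to\infty$ for each fixed $a\in A$.

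The hard part is the last step: upgrading this to an honest homotopy between $\phi_0^{(m)}$ and $\phi_1^{(m)}$ for large $m$, that is, arranging for the lifted path to have the originally prescribed maps as its endpoints rather than merely maps agreeing with them at infinity. This is exactly the technical core of \cite[Corollary 4.3]{B}, and the way to resolve it is to build the desired endpoints into the C*-algebra before invoking semiprojectivity — replacing $C$ by the subalgebra of paths whose values at $0$ and $1$ lie in $\Phi_0(A)+I$ and $\Phi_1(A)+I$ respectively, together with a reparametrisation of $[0,1]$ — and running the lifting argument once more, so that the resulting homotopy satisfies $H^{(m)}_0=\phi_0^{(m)}$ and $H^{(m)}_1=\phi_1^{(m)}$ for all $m\ge N$. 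This contradicts $\phi_0^{(m)}\nsim_h\phi_1^{(m)}$, finishing the proof; the constant $\delta$ depends only on $A$ and the chosen generators, not on $B$ or $\phi_0,\phi_1$, because the whole construction is uniform in the target. Since all of this is carried out in \cite{B}, I would in practice just cite \cite[Corollary 4.3]{B}.
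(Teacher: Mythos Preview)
The paper provides no proof of this lemma; it is simply cited as \cite[Corollary~4.3]{B}, exactly as you note at the outset and again at the end of your proposal. So your approach---to cite Blackadar and move on---coincides with the paper's.

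Your supplementary sketch of Blackadar's argument is a reasonable outline of the contradiction-plus-semiprojectivity strategy, and you are candid that the endpoint-pinning step (forcing $H^{(m)}_0=\phi_0^{(m)}$ and $H^{(m)}_1=\phi_1^{(m)}$ rather than merely asymptotic agreement) is the delicate part and is handled in \cite{B} rather than here. That honesty is appropriate: the fix you gesture at---building the endpoint constraints into the target algebra before lifting---is indeed the right idea, but carrying it out cleanly requires the machinery developed in \cite{B}, so deferring to the reference is the correct call for a paper at this level of detail.
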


\begin{theorem}\label{homo to m-standard}
Let $A,B\in \mathcal{C}$. Then any homomorphism from $A$ to $B$ is homotopic to a homomorphism of $m$-standard form for some $m$.
\end{theorem}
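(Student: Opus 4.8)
The plan is to cut the one-dimensional part of $Sp(B)$ into many short pieces, to freeze $\phi$ on each piece, to replace that piece by a standard-form block produced by Lemma \ref{appro3}, and to keep every perturbation small enough that Lemma \ref{homolift} turns uniform closeness into an honest homotopy.

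First I would record the anatomy of a homomorphism $\phi\colon A\to B$. Writing $F_2'=\bigoplus_{j'=1}^{l'}M_{h'_{j'}}$ and $Sp(B)=Sp(F_1')\sqcup\coprod_{j'=1}^{l'}(0,1)_{j'}$, giving $\phi$ is the same as giving the finite-dimensional homomorphism $\pi'\circ\phi\colon A\to F_1'$ together with, for each $j'$, a norm-continuous path $t\mapsto\phi^{j'}_t\in{\rm Hom}(A,M_{h'_{j'}})$, $t\in[0,1]$, subject to $\phi^{j'}_0=\varphi'_{0,j'}\circ\pi'\circ\phi$ and $\phi^{j'}_1=\varphi'_{1,j'}\circ\pi'\circ\phi$, where $\varphi'_{0,j'},\varphi'_{1,j'}\colon F_1'\to M_{h'_{j'}}$ are the $j'$-th blocks of $\varphi'_0,\varphi'_1$. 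After a preliminary homotopy of $\phi$ (Lemma \ref{basichom}, together with the straightforward additional homotopies of the kind used in its proof, localised near the endpoints $0_{j'},1_{j'}$ and near the subdivision nodes introduced below, which slide interior spectral points to the ends of the intervals and conjugate the resulting finite-dimensional representations by paths of unitaries) I would assume that the spectrum of $\phi$ at every endpoint $0_{j'},1_{j'}$ and at every subdivision node lies in $Sp(F_1)$; this is the property that will let the standard-form blocks be reconciled at the nodes.

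Since $A$ is a one-dimensional NCCW complex it is semiprojective and finitely generated, so I would fix a finite generating set $\mathcal{G}\subset A$ and a constant $\delta_0>0$ as in Lemma \ref{homolift}, then feed $F=\mathcal{G}$ and $\delta=\delta_0/2$ into Lemma \ref{appro3} to obtain a finite set $G\supseteq\mathcal{G}$ and an $\varepsilon>0$. Using uniform continuity of the finitely many functions $t\mapsto\phi^{j'}_t(g)$ ($1\le j'\le l'$, $g\in G$) I would choose $n$ so large that $\|\phi^{j'}_t(g)-\phi^{j'}_{t'}(g)\|<\min\{\varepsilon,\delta_0/2\}$ whenever $t,t'$ lie in a common interval $[\tfrac{r-1}{n},\tfrac{r}{n}]$, and take the associated $n$-partition of $Sp(B)$. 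On each dividing interval $[\tfrac{r-1}{n},\tfrac{r}{n}]_{j'}$, identified with $[0,1]$, I would apply Lemma \ref{appro3} with matrix size $h'_{j'}$ to the $\varepsilon$-close pair $\psi=\phi^{j'}_{(r-1)/n}$, $\varphi=\phi^{j'}_{r/n}$; this produces a homomorphism on that interval which agrees with $\phi^{j'}$ at the two endpoints, stays within $\delta_0/2$ of $\phi^{j'}_{(r-1)/n}$ on $\mathcal{G}$, has the expression $(*)$ of Definition \ref{defstd}(i), and is homotopic on that interval to a homomorphism of $3$-standard form. Because every replacement agrees with $\phi$ at every node, the blocks glue to a genuine homomorphism $\psi_1\colon A\to B$ with $\pi'\circ\psi_1=\pi'\circ\phi$ and $\|\psi_1(g)-\phi(g)\|<\delta_0$ for all $g\in\mathcal{G}$, so that $\psi_1\sim_h\phi$ by Lemma \ref{homolift}.

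The last step is to upgrade $\psi_1$ — which has the expression $(*)$ on each dividing interval, but with eigenvalue tracks $\{s_k(t)\}$ that need not yet be basic — to an honest $m$-standard homomorphism. Refining the partition further (taking $m=3n$, or a larger multiple of $n$ if needed), I would run the techniques (a)--(d) from the proof of Lemma \ref{appro3} on the tracks: each track is either constant at a point of $Sp(F_1)$ or runs inside a single interval of $Sp(SF_2)$, and, its endpoints having been pinned (by the Lemma \ref{basichom} step) to the limit points $0_m,1_m$ or to points of $Sp(F_1)$, it is homotopic rel endpoints — after a unitary conjugation rewriting $\pi_{0_m}$ and $\pi_{1_m}$ as sums of point evaluations at points of $Sp(F_1)$ — to $\theta_j$, to $(t,m)$, or to $(1-t,m)$. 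Carried out simultaneously over all $j'$ and compatibly at the shared nodes, this would be a homotopy through homomorphisms $A\to B$ ending at an $m$-standard homomorphism, so $\phi$ itself is homotopic to one of $m$-standard form. The step I expect to be the main obstacle is precisely this last compatibility bookkeeping: one must arrange in advance that $\phi$ has spectrum in $Sp(F_1)$ at every node so that the node values factor through the finite-dimensional quotient and the two adjacent blocks can be matched there, and the unitary conjugations used to put a constant $0_m$- (or $1_m$-) track into basic form must be chosen identically on the two sides of each node; one also has to fix $G,\varepsilon,n$ uniformly over the finitely many matrix blocks of $F_2'$. Everything else is a routine assembly of Lemmas \ref{basichom}, \ref{appro3}, and \ref{homolift}.
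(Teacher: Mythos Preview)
Your proposal is correct and follows essentially the same route as the paper: invoke Lemma~\ref{basichom}, choose $\mathcal{G},\delta$ via semiprojectivity and Lemma~\ref{homolift}, obtain $G,\varepsilon$ from Lemma~\ref{appro3}, pick an $n$-partition by uniform continuity, replace $\phi$ on each dividing interval by the output of Lemma~\ref{appro3} (gluing at the nodes), conclude $\phi\sim_h\psi$ from Lemma~\ref{homolift}, and then pass to the $3n$-partition and run techniques (a)--(d) interval by interval. The only difference is cosmetic: the paper works with a single matrix size $k$ via an embedding $B\subset M_k(C[0,1])$ rather than block-by-block, and it does not need your extra hypothesis that $Sp(\phi)$ lie in $Sp(F_1)$ at the \emph{interior} nodes---matching $\phi$ exactly at each node already suffices for the gluing, and the rel-endpoints homotopies of the tracks then propagate sequentially from one dividing interval to the next.
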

\begin{proof}
Note that $A$ is semiprojective and finitely generated by \cite{ELP1}. Set $\mathcal{G}$ be the finite set generates $A$, with $\delta>0$ as in Lemma \ref{homolift}, apply Lemma \ref{appro3} for $A$, $\mathcal{G}$, $\delta/2$ and $M_k(\mathbb{C})$, with $k$ large enough that $B\subset M_k(C[0,1])$, to obtain a finite subset $G\subset A$ and $\varepsilon>0$.

Let $\phi:A\rightarrow B$ be a homomorphism, by Lemma \ref{basichom}, we may assume that $Sp(\pi_e\circ\phi)\subset Sp(F_1)$.
Since $G\cup\mathcal{G}$ is finite, then there exist $n\in \mathbb{N}^+$ and an $n$-partition of $Sp(B)$, such that for any $x,y$ in the same dividing interval, we have
 $$
 \|\phi_x(g)-\phi_y(g)\|<\min\{\varepsilon,\frac{\delta}{2}\},\quad\forall\,g\in G\cup \mathcal{G}.
 $$

 Consider each dividing interval $I_i^r\subset Sp(B)$, and let $x=(\frac{r}{n},i),y=(\frac{r+1}{n},i)$. By Lemma \ref{appro3}, there exists a homomorphism $\psi|_{I_i^r}:A\rightarrow M_k(C(I_i^r))$ such that
 $$
 \psi_{(\frac{r}{n},i)}=\phi_{(\frac{r}{n},i)},\quad \psi_{(\frac{r+1}{n},i)}=\phi_{(\frac{r+1}{n},i)}
 $$ and
 $$
 \|\psi_t(g)-\phi_{(\frac{r}{n},i)}(g)\|<\frac{\delta}{2},\quad\forall\,g\in \mathcal{G},\,\,t\in I_i^r.
 $$
 The $\psi|_{I_i^r}$ fit together to define a single homomorphism $\psi:A\rightarrow B$ and $Sp(\pi_e\circ\psi)\subset Sp(F_1)$.

 It is clear that
 $$
 \|\phi|_{I_i^r}(g)-\psi|_{I_i^r}(g)\|<\delta,\quad\forall\,g\in \mathcal{G},
 $$
 then we have
 $$
 \|\phi(g)-\psi(g)\|<\delta,\quad\forall\,g\in \mathcal{G}.
 $$
 By the conclusion of Lemma \ref{homolift}, $\phi$ is homotopic to $\psi$.

  Consider the $3n$-partition of $Sp(B)$, $\psi$ has the expression $(*)$ on each dividing interval $J_1^1,\cdots,J_i^r,\cdots,J_l^{3n}$. Beginning with $J_1^1$, using the  basic homotopic techniques in Lemma \ref{appro3}, $\psi$ is homotopic to a homomorphism $\psi^{(1)}$ which has standard form on $J_1^1$, step by step, we can obtain that $\psi$ is homotopic to $\psi^{(3n)}$ and $\psi^{(3n)}$ has $3n$-standard form on all the dividing intervals. Then $\phi$ is homotopic to a homomorphism of $3n$-standard form.
\end{proof}

\section{Stable homotopy}
At the beginning of this section, we list one more example and write down some homotopy paths, the idea of which also works for Example \ref{EX 2}.
\begin{example}\label{EX 4}
Suppose that $A=C[0,1]$, $B=C(S^1)$, and $\psi$ is a 2-standard form homomorphism from $A$ to $B$:
$$
\psi(f)=\begin{cases}
  f(2t), & \mbox{if } 0\leq t\leq \frac{1}{2} \\
  f(2-2t), & \mbox{if } \frac{1}{2}\leq t\leq 1
\end{cases}.
$$
Denote by $\eta$ the 1-standard form homomorphism from $A$ to $B$ that
$$
\eta(f)=f(1).
$$
Note that the homomorphism $\psi\oplus \eta$ is homotopic to a 3-standard form homomorphism $\rho$,
$$
\rho(f)=\begin{cases}
\left(
\begin{array}{cc}
  f(3t) &  \\
   & f(1)
\end{array}
\right), & \mbox{if } 0\leq t\leq \frac{1}{3} \\
  \left(
\begin{array}{cc}
  f(1)  &  \\
   & f(1)
\end{array}
\right), & \mbox{if } \frac{1}{3}\leq t\leq \frac{2}{3} \\
   \left(
\begin{array}{cc}
  f(3-3t)  &  \\
   & f(1)
\end{array}
\right), & \mbox{if } \frac{2}{3}\leq t\leq 1
\end{cases},
$$
which can also be written as
$$
\rho(f)=\begin{cases}
v(t)\left(
\begin{array}{cc}
  f(3t) &  \\
   & f(1)
\end{array}
\right)v(t)^*, & \mbox{if } 0\leq t\leq \frac{1}{3} \\
 v(t) \left(
\begin{array}{cc}
  f(1)  &  \\
   & f(1)
\end{array}
\right)v(t)^*, & \mbox{if } \frac{1}{3}\leq t\leq \frac{2}{3} \\
  v(t) \left(
\begin{array}{cc}
  f(1)  &  \\
   & f(3-3t)
\end{array}
\right)v(t)^*, & \mbox{if } \frac{2}{3}\leq t\leq 1
\end{cases},
$$
where
$$
v(t)=\begin{cases}
\left(
\begin{array}{cc}
  1 &  \\
   & 1
\end{array}
\right), & \mbox{if } 0\leq t\leq \frac{1}{3} \\
  \left(
\begin{array}{cc}
  \cos (\frac{\pi}{2}\cdot(3t-1))  & \sin (\frac{\pi}{2}\cdot(3t-1)) \\
   -\sin (\frac{\pi}{2}\cdot(3t-1))& \cos (\frac{\pi}{2}\cdot(3t-1))
\end{array}
\right), & \mbox{if } \frac{1}{3}\leq t\leq \frac{2}{3} \\
   \left(
\begin{array}{cc}
    & 1 \\
   1&
\end{array}
\right), & \mbox{if } \frac{2}{3}\leq t\leq 1
\end{cases}.
$$
From this we can see that $\rho$ is homotopic to the 1-standard form homomorphism $\rho'$,
$$
\rho'(f)=
v(t)\left(
\begin{array}{cc}
  f(t) &  \\
   & f(1-t)
\end{array}
\right)v(t)^*
$$
\end{example}
The construction above can be generalized as follows:
\begin{remark}\label{trick}
Suppose that $A,B\in \mathcal{C}$, $\psi:\,A\to B$ is a homomorphism, and $\iota'$ is the natural embedding map from $B$ to $F_2'\otimes C[0,1]$.

For $t_0\in (0,1)$, denote by $\eta_{t_0}$ the homomorphism from $A$ to $F_2'\otimes C[0,1]$ defined by
$$
\eta_{t_0}(f)=\psi(f)(t_0)\in F_2'\otimes C[0,1],\quad \forall f\in A.
$$

Note that $\iota'\circ\psi\sim_h \rho$, where
$$
\rho(f)(t)=
\begin{cases}
\psi(f)(3t_0\cdot t), & \mbox{if } 0\leq t\leq \frac{1}{3} \\
  \psi(f)(t_0), & \mbox{if } \frac{1}{3}\leq t\leq \frac{2}{3} \\
  \psi(f)(3(1-t_0)t+3t_0-2), & \mbox{if } \frac{2}{3}\leq t\leq 1
\end{cases}.
$$
Also,
$$
\rho\oplus\eta_0= V(t)\rho' V(t)^*,
$$
where
$$
\rho'(f)(t)=\begin{cases}
\left(
\begin{array}{cc}
  \psi(f)(3t_0\cdot t) &  \\
   & \psi(f)(t_0)
\end{array}
\right) , & \mbox{if } 0\leq t\leq \frac{1}{3} \\
  \left(
\begin{array}{cc}
  \psi(f)(t_0) &  \\
   & \psi(f)(t_0)
\end{array}
\right) , & \mbox{if } \frac{1}{3}\leq t\leq \frac{2}{3} \\
   \left(
\begin{array}{cc}
  \psi(f)(t_0) &  \\
   & \psi(f)(3(1-t_0)t+3t_0-2)
\end{array}
\right), & \mbox{if } \frac{2}{3}\leq t\leq 1
\end{cases},
$$
$$V(t)=\begin{cases}
\left(
\begin{array}{cc}
  1 &  \\
   & 1
\end{array}
\right)\otimes 1_{F_2'\otimes C[0,1]}, & \mbox{if } 0\leq t\leq \frac{1}{3} \\
  \left(
\begin{array}{cc}
  \cos (\frac{\pi}{2}\cdot(3t-1))  & \sin (\frac{\pi}{2}\cdot(3t-1)) \\
   -\sin (\frac{\pi}{2}\cdot(3t-1))& \cos (\frac{\pi}{2}\cdot(3t-1))
\end{array}
\right)\otimes 1_{F_2'\otimes C[0,1]}, & \mbox{if } \frac{1}{3}\leq t\leq \frac{2}{3} \\
   \left(
\begin{array}{cc}
    & 1 \\
   1&
\end{array}
\right)\otimes 1_{F_2'\otimes C[0,1]}, & \mbox{if } \frac{2}{3}\leq t\leq 1
\end{cases}.
$$
We also have
$
\rho'\sim_h \rho'',
$
where
$$
\rho''(f)(t)=   \left(
\begin{array}{cc}
    \psi(f)(\frac{t}{t_0})&  \\
   &\psi(f)(\frac{t-t_0}{1-t_0})
\end{array}
\right).
$$
That is, we have
$$
\phi\oplus\eta_0\sim_h V(t)\rho''V(t)^*.
$$
Furthermore, the homotopy path $H_\theta$ with $\theta\in [0,1]$, $H_0=\phi\oplus\eta_0$ and  $H_1=V(t)\rho''V(t)^*$, can be chosen to satisfy
$$
\pi_0'\circ H_\theta(f)=\left(
\begin{array}{cc}
    \psi(f)(0)&  \\
   &\psi(f)(t_0)
\end{array}
\right) \quad {\rm and}\quad
\pi_1'\circ H_\theta(f)=\left(
\begin{array}{cc}
    \psi(f)(1)&  \\
   &\psi(f)(t_0)
\end{array}
\right),
$$
where $\pi_0',\,\pi_1'$ are the point evaluations of $M_2(F_2\otimes C[0,1])$ at 0 and 1.
%Using the last two equations, we can apply the same construction to
%$$
%\psi\oplus (\eta_{t_0}\otimes 1_B).
%$$

As the homomorphism $\eta_{t_0}\otimes 1_B$ has the form
$$
u (\bigoplus^{r}\eta_{t_0})u^*,\quad {\rm for~ some~} u\in C([0,1],M_r(F_2')),
$$
where $r=\sum_{j'=1}^{l'}h_{j'}'$.
For homomorphism $\psi\oplus (\eta_{t_0}\otimes 1_B)=(1_B\oplus u)(\psi\oplus\bigoplus^{r}\eta_{t_0})(1_B\oplus u^*)$,
we can apply the same construction to $\psi\oplus\bigoplus^{r}\eta_{t_0}$
by regarding the first copy of $\eta_{t_0}$ in the diagonal as $\eta_{t_0}$,
we will have a homotopy path from $A$ to $M_r(B)$.
\end{remark}

The idea of the following lemma comes from Example \ref{EX 2}, Example \ref{EX 4}, and Remark \ref{trick}.

\begin{lemma}\label{m to 1}
Let $A,B\in \mathcal{C}$. Given any $m$-standard form homomorphism $\psi$ from $A$ to $B$, there exists a homomorphism $\eta$ from $A$ to $M_r(B)$, for some integer $r$, such that $\psi\oplus \eta$ is homotopic to a homomorphism with 1-standard form.
\end{lemma}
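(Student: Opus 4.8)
The strategy is to reduce the number of "dividing intervals" one at a time, using the stabilization trick of Remark \ref{trick} to absorb the extra pieces. Recall that an $m$-standard form homomorphism $\psi:A\to B$ is, on each of the $ml$ dividing intervals $I_i^r$ of $Sp(B)$ (after identifying $I_i^r$ with $[0,1]$), of the form
$$
\psi_t(f)=U(t)^*\,\mathrm{diag}\bigl(f(s_1(t)),\dots,f(s_k(t))\bigr)\,U(t),
$$
where each coordinate function $s_j(t)$ is one of the basic forms: a constant point $\theta_j\in Sp(F_1)$, or one of $(t,m)$, $(1-t,m)$ running over a copy of $(0,1)$ in $Sp(SF_2)$. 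The point of $1$-standard form is that the underlying interval is not subdivided: $\psi$ has the expression $(*)$ directly on each copy $[0,1]$ of $Sp(SF_2')$, i.e.\ with $m=1$. So the task is purely to "glue" the $m$ consecutive sub-pieces on each copy of $(0,1)$ into a single standard-form expression on $[0,1]$, at the cost of adding a homomorphism with finite-dimensional image.

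First I would set up the bookkeeping: label the break points of the $m$-partition on each copy of $(0,1)_{i'}$ in $Sp(SF_2')$ as $\frac{1}{m},\frac{2}{m},\dots,\frac{m-1}{m}$, and let $\eta$ be the direct sum, over all copies $i'$ and all interior break points $\frac{s}{m}$, of the homomorphisms $\eta_{(\frac{s}{m},i')}(f)=\psi(f)(\tfrac{s}{m})$ evaluated at those points; each such summand has finite-dimensional image (it factors through $F_2'$, hence through a point-evaluation of $B$, hence through $F_1$). Then I would apply the construction of Remark \ref{trick}, one break point $t_0=\frac{s}{m}$ at a time: adjoining $\eta_{t_0}$ lets us "open up" the diagonal and, via the explicit unitary path $V(t)$ and the homotopies $\rho\sim_h\rho'\sim_h\rho''$ from that remark, replace the two adjacent sub-pieces meeting at $t_0$ by a single piece running over $[0,t_0]$ in one diagonal slot and $[t_0,1]$ in the other — reparametrized to $\psi(f)(t/t_0)$ and $\psi(f)((t-t_0)/(1-t_0))$. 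Crucially, Remark \ref{trick} also records that the homotopy can be arranged to fix the endpoint evaluations $\pi_0'\circ H_\theta$ and $\pi_1'\circ H_\theta$, which is exactly what is needed for the homotopy to stay inside $B$ (respecting the boundary conditions $f(0)=\varphi_0'(a)$, $f(1)=\varphi_1'(a)$) rather than merely inside $F_2'\otimes C[0,1]$; one invokes Lemma \ref{unitary} (or a direct check) to see that the conjugated path indeed lands in $M_{r+1}(B)$.

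Iterating over all $m-1$ interior break points on all $l'$ copies (adding one new finite-dimensional summand $\eta_{t_0}$ each time, or doing all of them simultaneously after the first as the last paragraph of Remark \ref{trick} indicates), one arrives at a homomorphism that, on each copy $[0,1]$ of $Sp(SF_2')$, has an expression of the form $(*)$ with no further subdivision of the parameter interval — but the coordinate functions $s_j(t)$ obtained this way (affine reparametrizations like $t/t_0$, restricted to sub-intervals, plus the constant pieces from $\eta$) need not yet be in the list of \emph{basic} forms of Definition \ref{defstd}(ii). So the final step is to run the ten-case classification and the basic homotopies (a)--(d) from the proof of Lemma \ref{appro3} once more, on each copy $[0,1]$, to straighten every $s_j(t)$ into a basic form ($\theta_j$, $(t,m)$, or $(1-t,m)$); these homotopies again fix the endpoints, so they take place inside $M_{r+1}(B)$. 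The result is a $1$-standard form homomorphism homotopic to $\psi\oplus\eta$, as required.

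The main obstacle I anticipate is the boundary bookkeeping: every homotopy and every unitary conjugation in Remark \ref{trick} is naturally a statement about maps into $F_2'\otimes C[0,1]$, and one must verify at each stage that the boundary-compatibility conditions defining $B$ inside $F_2'\otimes C[0,1]$ are preserved — i.e.\ that the paths genuinely live in $M_{r+1}(B)$ and not just in the ambient interval algebra. This is handled by the endpoint-fixing property of $H_\theta$ noted in Remark \ref{trick} together with Lemma \ref{unitary}, but checking compatibility across the break points where two sub-pieces are glued (so that the newly-reparametrized $s_j$ match up continuously and the unitary $U(t)$ extends over the seam) is the delicate part; the rest is the routine ten-case straightening already carried out in Lemma \ref{appro3}.
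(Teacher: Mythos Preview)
Your approach is essentially the paper's: take $\eta=\bigoplus_{k=1}^{m-1}\psi_{k/m}\otimes 1_B$ and apply the splitting trick of Remark \ref{trick} once at each break point $k/m$. Two minor corrections: the point evaluation $\psi(f)(k/m)$ factors through $F_2'$, not $F_1$ (only the endpoints $0,1$ factor through $F_1$), and the boundary compatibility you flag is handled exactly by the $\otimes\,1_B$ device in the last paragraph of Remark \ref{trick} rather than by Lemma \ref{unitary}; moreover your closing ten-case straightening step is unnecessary, since cutting an $m$-standard $\psi$ at $t_0=k/m$ and reparametrizing each piece to $[0,1]$ already yields coordinate functions of basic form, so after $m-1$ cuts the result is directly $1$-standard.
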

\begin{proof}

Consider the homomorphism
$$
\eta:=\bigoplus_{k=1}^{m-1}\psi_{\frac{k}{m}}\otimes 1_B,\quad A \to M_r(B),
$$
where $r=(m-1)\sum_{j'=1}^{l'}h_{j'}'$, and $\psi_{\frac{k}{m}}:\, A\to F_2'$ is the point evaluation of $B$ at $\frac{k}{m}$ composed with $\psi$.
Applying the trick we describe in Remark \ref{trick} $m-1$ times to the points  $\{\frac{k}{m}\}_{k=1}^{m-1}$,
we obtain that $\psi\oplus \eta$ is homotopic to a homomorphism of 1-standard form.

\end{proof}
{\bf Recall that for a $m$-standard form, unitary is just piecewise continuous (Example \ref{piece}). But when we consider the 1-standard form, the unitary is a continuous element in} $\mathbf{F_2'\otimes C[0,1]}.$

We list the basic homotopy lemma from functional calculus:
\begin{proposition}\label{homotopy lemma}
Let $D\subset M_n(\mathbb{C})$ be a sub $\mathrm{C}^*$-algebra. Let $v$ be a unitary in $M_n(\mathbb{C})$ with
$$
vdv^*=d, \quad {\rm for~all}~ d\in D.
$$
Then there exists a unitary path $V_t$, $t\in [0,1]$, in $M_n(\mathbb{C})$, with $V_0=v,\,V_1=1_n$,
inducing the relation
$$
v\sim_h 1_n,
$$
such that
$$
V_t dV_t^*=d,\quad  {\rm for~all}~  d\in D ~ {\rm and}~  t\in [0,1].
$$

\end{proposition}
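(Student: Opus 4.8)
The plan is to reduce the statement to the structure theory of finite-dimensional $\mathrm{C}^*$-algebras and their commutants, and then to produce the path inside the unitary group of the commutant, where no compatibility constraint survives. First I would invoke the classification of representations of finite-dimensional $\mathrm{C}^*$-algebras: writing $D\cong\bigoplus_{s=1}^{N}M_{d_s}(\mathbb{C})$, the inclusion $D\hookrightarrow M_n(\mathbb{C})$ is, up to conjugation by a fixed unitary, a direct sum of amplifications, so that $M_n(\mathbb{C})\cong\bigoplus_{s=1}^{N}\big(M_{d_s}(\mathbb{C})\otimes M_{m_s}(\mathbb{C})\big)\ \oplus\ M_{n_0}(\mathbb{C})$ with $\sum_s d_s m_s + n_0 = n$, where $D$ acts as $\bigoplus_s\big(M_{d_s}(\mathbb{C})\otimes 1_{m_s}\big)\oplus 0$ on the last summand. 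Since conjugating the whole picture by a fixed unitary changes nothing, I may assume $D$ already sits in this normal form.

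Next I would compute the commutant. By the standard double-commutant bookkeeping for these amplified inclusions, $D' \cap M_n(\mathbb{C}) \cong \bigoplus_{s=1}^{N}\big(1_{d_s}\otimes M_{m_s}(\mathbb{C})\big)\ \oplus\ M_{n_0}(\mathbb{C})$. The hypothesis $vdv^*=d$ for all $d\in D$ says precisely that $v$ is a unitary in $D'\cap M_n(\mathbb{C})$, which is again a finite-dimensional $\mathrm{C}^*$-algebra — in particular a direct sum of full matrix algebras. The point now is that the unitary group of a direct sum of full matrix algebras is connected (each $U(M_k(\mathbb{C}))$ is path-connected, being the continuous image of the connected group via the exponential/spectral picture), so there is a continuous path $V_t$ in $U(D'\cap M_n(\mathbb{C}))$ with $V_0=v$ and $V_1=1_n$. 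Concretely one may take $V_t=\exp(it h)$ for a self-adjoint $h\in D'\cap M_n(\mathbb{C})$ with $\exp(ih)=v$, obtained by functional calculus on the finitely many eigenvalues of $v$ inside the commutant. Because every $V_t$ lies in $D'\cap M_n(\mathbb{C})$, we automatically get $V_t d V_t^* = d$ for all $d\in D$ and all $t$, which is the assertion.

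I do not expect any serious obstacle here; the only point requiring a little care is the very first reduction — making precise that an arbitrary unital inclusion $D\hookrightarrow M_n(\mathbb{C})$ (and, in the non-unital case contemplated by the summand $M_{n_0}(\mathbb{C})$, an arbitrary inclusion) is unitarily equivalent to the amplified normal form, and keeping track of multiplicities so that the commutant is identified correctly. Once that is in hand, connectedness of $U(D'\cap M_n(\mathbb{C}))$ does all the work, and the functional-calculus construction of $V_t$ is routine. (If one prefers to avoid the normal-form discussion entirely, an alternative is: $v\in D'\cap M_n(\mathbb{C})$ by hypothesis; $D'\cap M_n(\mathbb{C})$ is a finite-dimensional $\mathrm{C}^*$-algebra, hence has connected unitary group; choose any continuous unitary path from $v$ to $1_n$ within it — this is the shortest route, and the commutant identification is needed only to recognize $v$ as an element of it, which is immediate.)
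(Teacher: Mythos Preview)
Your proposal is correct and matches the paper's intended approach: the paper does not actually give a proof but simply introduces the statement as ``the basic homotopy lemma from functional calculus,'' which is precisely your construction $V_t=\exp(ith)$ with $h$ a self-adjoint logarithm of $v$ chosen in the commutant $D'\cap M_n(\mathbb{C})$. Your parenthetical ``shortest route'' (connectedness of the unitary group of the finite-dimensional commutant) is exactly the content the paper takes for granted.
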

Suggested by Example \ref{EX 3}, we have
\begin{lemma}\label{same diagram}
Let $A,B\in \mathcal{C}$, $\psi_1,\psi_2$ are two 1-standard form homomorphisms from $A$ to $B$,
inducing the same $\lambda\in C(A,B)$,
  $$
\xymatrixcolsep{2pc}
\xymatrix{
{\,\,0\,\,} \ar[r]^-{}
& {\,\,\mathrm{K}_0(A)\,\,} \ar[d]_-{\lambda_{0*}} \ar[r]^-{\pi_*}
& {\,\,\mathrm{K}_0(F_1)\,\,} \ar[d]_-{\lambda_{0}} \ar[r]^-{\alpha-\beta}
& {\,\,\mathrm{K}_1(SF_2)\,\,} \ar[d]_-{\lambda_{1}} \ar[r]^-{\iota_*}
& {\,\,\mathrm{K}_1(A)\,\,} \ar[d]_-{\lambda_{1*}} \ar[r]^-{}
& {\,\,0\,\,}\\
{\,\,0\,\,} \ar[r]^-{}
& {\,\,\mathrm{K}_0(B)\,\,} \ar[r]_-{\pi_*'}
& {\,\,\mathrm{K}_0(F_1') \,\,} \ar[r]_-{\alpha'-\beta'}
& {\,\,\mathrm{K}_1(SF_2') \,\,} \ar[r]_-{\iota_*'}
& {\,\,\mathrm{K}_1(B)\,\,} \ar[r]^-{}
& {\,\,0\,\,}.}
$$
Then there is a homomorphism $\zeta$ with finite dimensional image from $A$ to $M_r(B)$ such that
$$
 \psi_1\oplus \zeta\sim_h\psi_2\oplus \zeta.
 $$
\end{lemma}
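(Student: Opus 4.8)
The plan is to reduce, through homotopies and stabilisations by homomorphisms with finite-dimensional image, to the situation of Example~\ref{EX 3}: one arranges that, after a common finite-dimensional-image stabilisation, $\psi_1$ and $\psi_2$ differ only by conjugation by a single unitary $W$, and then kills $W$ by passing to $W\oplus W^{*}$. Throughout, write $\iota'\colon B\hookrightarrow F_2'\otimes C[0,1]$ for the natural (injective) embedding, and recall that on each dividing interval $(0,1)_{j'}$ of $Sp(B)$, identified with $[0,1]$, a $1$-standard form $\psi$ has the shape $\psi(f)(t)=U(t)^{*}\,\mathrm{diag}(f(s_1(t)),\dots,f(s_k(t)))\,U(t)$ with $U$ a \emph{continuous} unitary and each $s_i(t)$ among the basic functions $\theta_1,\dots,\theta_p,(t,m),(1-t,m)$. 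The ``discrete data'' of $\psi$ consists of the multiplicities of these basic functions over each $(0,1)_{j'}$ together with the restriction of $\psi$ to the finite-dimensional quotient $F_1'$ of $B$; by Lemma~\ref{basichom} we may normalise $\psi_1,\psi_2$ so that the latter factors through $\pi\colon A\to F_1$, giving a $*$-homomorphism $\bar\psi\colon F_1\to F_1'$ with $\mathrm{K}_0(\bar\psi)=\lambda_0$.

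\emph{Step 1 (matching the finite-dimensional parts).} Since $\psi_1,\psi_2$ induce the same $\lambda$, we have $\mathrm{K}_0(\bar\psi_1)=\mathrm{K}_0(\bar\psi_2)$, so $\bar\psi_1=\mathrm{Ad}(w)\circ\bar\psi_2$ for some $w\in U(F_1')$. As $U(F_1')$ is connected and selfadjoint elements lift along $B\twoheadrightarrow F_1'$, we may write $w=e^{ih}$ and lift to $\widetilde w=e^{i\widetilde h}\in U_0(B)$; conjugating by the path $e^{is\widetilde h}$ gives $\psi_2\sim_h\mathrm{Ad}(\widetilde w)\circ\psi_2$, still of $1$-standard form and now agreeing with $\psi_1$ at every point of $F_1'$, hence at every endpoint $t=0,1$ of every $(0,1)_{j'}$.

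\emph{Step 2 (matching the data over the intervals; the main obstacle).} Over each $(0,1)_{j'}$, commutativity of $\lambda$ with $\alpha-\beta$ and $\alpha'-\beta'$ forces the net moving multiplicities ($\#(t,m)$ minus $\#(1-t,m)$) of $\psi_1$ and $\psi_2$ to coincide, and, together with the matched endpoints from Step~1, the full equality of $\lambda$ pins the remaining multiplicities down up to the reshuffling moves available to us. Those moves are exactly the ones of Remark~\ref{trick} (iterated as in Lemma~\ref{m to 1}, and in the spirit of Examples~\ref{EX 1} and~\ref{EX 2}): adding on $\eta_{t_0}\otimes 1_B$, where $\eta_{t_0}(f)=\psi_\ell(f)(t_0)$ has finite-dimensional image so that $\eta_{t_0}\otimes 1_B\colon A\to F_2'\otimes B\cong M_r(B)$ has finite-dimensional image, one may split a dividing interval at $t_0$ and, after a rotation, trade pairs of moving functions and interior constants rel endpoints. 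Carrying this out for both $\psi_1$ and $\psi_2$ and letting $\zeta_1$ be the finite direct sum of \emph{all} the point-evaluation summands used for either one (the superfluous ones ride along harmlessly), and re-applying Step~1 if necessary, we obtain $N$ and $1$-standard homomorphisms $\widetilde\psi_1,\widetilde\psi_2\colon A\to M_N(B)$ with $\widetilde\psi_\ell\sim_h\psi_\ell\oplus\zeta_1$, which have \emph{identical} discrete data over all of $Sp(B)$ and still agree at the endpoints. I expect this bookkeeping — checking that the mismatches are precisely those that the equality of $\lambda$ permits to be removed \emph{by a finite-dimensional-image stabilisation} (so that Remark~\ref{trick}, rather than adding an arbitrary cancelling homomorphism, is the correct tool) — to be the technical heart of the argument.

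\emph{Step 3 (killing the intertwiner, after Example~\ref{EX 3}).} Having the same diagonal patterns, $\iota'\circ\widetilde\psi_1=\mathrm{Ad}(W)\circ(\iota'\circ\widetilde\psi_2)$ for a unitary $W\in M_N(F_2'\otimes C[0,1])$, continuous on each $(0,1)_{j'}$, with $W(0)$ (resp.\ $W(1)$) commuting with $\widetilde\psi_2(A)(0)$ (resp.\ $\widetilde\psi_2(A)(1)$), because $\widetilde\psi_1$ and $\widetilde\psi_2$ agree at those endpoints. Conjugating $\widetilde\psi_2\oplus 0_{M_N(B)}\colon A\to M_{2N}(B)$ by $W\oplus W^{*}$ sends it to $\widetilde\psi_1\oplus 0$. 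Since $\det(W(t)\oplus W(t)^{*})\equiv 1$, the path $t\mapsto W(t)\oplus W(t)^{*}$ lies in the special unitary group, and using Proposition~\ref{homotopy lemma} to connect $W(0),W(1)$ to $1$ through unitaries commuting with $\widetilde\psi_2(A)(0),\widetilde\psi_2(A)(1)$ respectively, the resulting boundary square also lies in the special unitary group; as the latter is simply connected it bounds a disk, yielding a path $Y_s$ of unitaries in $M_{2N}(F_2'\otimes C[0,1])$ from $W\oplus W^{*}$ to $1_{2N}$ whose value at $t=0$ (resp.\ $t=1$) commutes with $\widetilde\psi_2(A)(0)\oplus 0$ (resp.\ $\widetilde\psi_2(A)(1)\oplus 0$) for every $s$. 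Hence $\mathrm{Ad}(Y_s)\circ\iota'\circ(\widetilde\psi_2\oplus 0)$ fixes the boundary data and (as in Lemma~\ref{unitary}) takes values in $\iota'(M_{2N}(B))$ for all $s$, giving a homotopy of homomorphisms $A\to M_{2N}(B)$ from $\widetilde\psi_1\oplus 0$ to $\widetilde\psi_2\oplus 0$. Combining with Step~2, $\psi_1\oplus\zeta\sim_h\psi_2\oplus\zeta$ with $\zeta:=\zeta_1\oplus 0_{M_N(B)}$, which has finite-dimensional image. The doubling is essential, exactly as in Example~\ref{EX 3}: it trivialises the determinant and pushes the entire obstruction into the simply connected special unitary group.
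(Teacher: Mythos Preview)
Your three-step plan is correct and follows the same broad strategy as the paper (match the diagonal patterns, then kill the remaining unitary intertwiner via the $W\oplus W^*$ trick of Example~\ref{EX 3}), but the paper's execution is tighter in both places where you expect work.

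For Step~2, no stabilisation is needed at all: the relevant move is not the interval-splitting of Remark~\ref{trick} but the \emph{internal} cancellation of Example~\ref{EX 4}, namely that inside a $1$-standard form a pair $(t,j)\oplus(1-t,j)$ on the $j'$-th block is homotopic \emph{in $B$} (rel endpoints) to the constants $\varphi_0(a)_j\oplus\varphi_1(a)_j$, i.e.\ to a specific sum of $\theta_i$'s. Applying this to both $\psi_1,\psi_2$ until on every block $\min(\#(t,j),\#(1-t,j))=0$, a short count using only that both maps induce the same $\lambda_0,\lambda_1$ and land in the same $B$ shows the resulting diagonals agree exactly; one obtains $\widetilde\psi_1=u\chi u^*$ and $\widetilde\psi_2=v\chi v^*$ with a \emph{common} $\chi$. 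The bookkeeping you flag as the technical heart therefore dissolves, and $\zeta_1$ is unnecessary.

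For Step~3, the paper avoids your SU disk-filling by first promoting $u(t)v(t)^*$ to a unitary $w$ \emph{in $B$ itself}: since both maps induce $\lambda_0$, there is $W\in U(F_1')$ with $\varphi_0'(W)^*u(0)v(0)^*$ and $\varphi_1'(W)^*u(1)v(1)^*$ commuting with the image, and Proposition~\ref{homotopy lemma} connects these to $1$ through commuting unitaries; gluing these collars onto $uv^*$ (reparametrised to $[\tfrac13,\tfrac23]$) gives $w\in U(B)$ with $\widetilde\psi_1\sim_h w\widetilde\psi_2 w^*$. Then $w\oplus w^*\sim_h 1$ in $U(M_2(B))$ finishes, with $\zeta=0$. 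Your simply-connected-SU argument also works (taking the side paths to be $\gamma\oplus\gamma^*$ keeps determinant $1$), but staying inside $B$ is cleaner and shows the only stabilisation required is by the zero map.
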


\begin{proof}
As the 1-standard form homomorphisms $\psi_1,\psi_2$ induce the same diagram, we have that for each ${j'}^{\rm th}$ block of $B$,
the differences of the multiplicities of $f_j(t)$ and $f_j(1-t)$ in $\psi_1,\psi_2$ are the same, which is equal to the $(j',j)^{\rm th}$ entry of $\lambda_1$.
On one hand, from Example \ref{EX 4},
we know that
$$
\rho'\sim_h\psi\oplus \eta\sim_h \xi,
$$
where
$$
\xi(f)=\left(
\begin{array}{cc}
  f(0) &  \\
   & f(1)
\end{array}
\right).
$$
On the other hand, using the same trick as in Example \ref{EX 4} and Remark \ref{trick},
we have
$$
 \psi_1\sim_h \widetilde{\psi}_1=u(t)\chi  u(t)^*\quad {\rm for\,\,some}~u(t)\in U(F_2'\otimes C[0,1])
$$
and
$$
 \psi_2\sim_h \widetilde{\psi}_2=v(t) \chi  v(t)^*\quad {\rm for\,\,some}~ v(t)\in U(F_2'\otimes C[0,1]).
$$

(Here, we can require both $u(t)\chi  u(t)^*$ and $v(t)\chi  v(t)^*$ to have 1-standard form with $\chi$  satisfing that at each ${j'}^{\rm th}$ block of $B$, at least one of the multiplicities of $f_j(t)$ and $f_j(1-t)$ is 0.)
%Just like that for the winding number!)

We hope that $u(t)v(t)^*$ is a unitary in $B$. This is not always true, but we claim that $u(t) v(t)^*$
is homotopic to a unitary in $B$.
This is because there exist $\zeta_1,\zeta_2: F_1\to F_1'$ such that we have the following commutative diagrams:
$$
\xymatrixcolsep{3pc}
\xymatrix{
{\,\,A\,\,} \ar[r]^-{u(t)\chi  u(t)^*} \ar[rd]_-{\pi}
& {\,\,B\,\,} \ar[r]^-{\pi'}
& {\,\,F_1'\,\,}\\
{\,\,\,\,}
& {\,\,F_1\,\,} \ar[ur]^-{\zeta_1}}\quad
{\rm and}\quad
\xymatrixcolsep{3pc}
\xymatrix{
{\,\,A\,\,} \ar[r]^-{v(t)\chi  v(t)^*} \ar[rd]_-{\pi}
& {\,\,B\,\,} \ar[r]^-{\pi'}
& {\,\,F_1'\,\,}\\
{\,\,\,\,}
& {\,\,F_1\,\,} \ar[ur]^-{\zeta_2}}.
$$
As $\zeta_1,\zeta_2$ induce the same map $\lambda_0$ between $\mathrm{K}_0(F_1)$ and $\mathrm{K}_0(F_1')$,
there exists a unitary $W\in F_1'$ such that
$$
W \zeta_2 W^*=\zeta_1,
$$
$$
\varphi_0'(W)(\pi_0 \circ\widetilde{\psi}_2)\varphi_0'(W)^*=u(0)v(0)^*(\pi_0 \circ\widetilde{\psi}_2)(u(0)v(0)^*)^*,\quad {\rm and}
$$
$$
\varphi_1'(W)(\pi_1 \circ\widetilde{\psi}_2)\varphi_1'(W)^*=u(1)v(1)^*(\pi_1 \circ\widetilde{\psi}_2)(u(1)v(1)^*)^*.
$$
Applying Proposition \ref{homotopy lemma}, we have
$$
\varphi_0'(W)^*u(0)v(0)^*\sim_h 1_{F_2'},\quad{\rm by}\,\,V_s,\,s\in[0,1],
$$
and
$$
\varphi_1'(W)^*u(1)v(1)^*\sim_h 1_{F_2'},\quad{\rm by}\,\,\widetilde{V}_s,\,s\in[0,1].
$$
Note that
$$
u(t)v^*(t)\sim_h w(t),
$$
where
$$
w(t)=\begin{cases}
\varphi_0'(W)\cdot V_{(1-3t)}, & \mbox{if } 0\leq t\leq \frac{1}{3} \\
  u(3t-1)v^*(3t-1), & \mbox{if } \frac{1}{3}\leq t\leq \frac{2}{3} \\
  \varphi_1'(W)\cdot V_{(3t-2)}, & \mbox{if } \frac{2}{3}\leq t\leq 1
\end{cases}\in U(B),
$$
and this gives
$$
\widetilde{\psi}_1=u(t)v(t)^*\widetilde{\psi}_2 v(t)u(t)^*\sim_h w(t)\widetilde{\psi}_2 w(t)^*.
$$
Using the trick from Example \ref{EX 3},
we deduce that
$$
%(u(t)\chi u(t)^*)\oplus (w(t)^* 0 w(t))=
(w(t)\widetilde{\psi}_2w(t)^*)\oplus (w(t)^* 0 w(t))\sim_h \widetilde{\psi}_2\oplus (w(t)^* 0 w(t)).
$$
\end{proof}

Example \ref{EX 1} shows that two homomorphisms with the same $\mathrm{KK}$-class may induce different diagrams in $C(A,B)$, and in that example we need to add enough point evaluations and push the spectrum point from 0 to 1. Suggested by this example, we will need:

\begin{lemma}\label{different diagram homotopic}
Let $\psi_1,\psi_2$ two 1-standard form homomorphisms from $A$ to $B$ inducing the same $\mathrm{KK}$-class. Then there is a homomorphism $\zeta$ with finite dimensional image from $A$ to $M_r(B)$, for some integer $r$ such that
$$
\psi_1\oplus \zeta\sim_h\psi_2\oplus \zeta.
$$
\end{lemma}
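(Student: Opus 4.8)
The strategy is to reduce to Lemma~\ref{same diagram} by absorbing the discrepancy between the two induced diagrams in $C(A,B)$ into a diagram-changing homotopy, after adjoining to $\psi_1$ and $\psi_2$ a common homomorphism with finite-dimensional image. The homotopy in question generalizes the spectrum-pushing path $H_\theta(f,a\oplus b\oplus c)=f(\theta)$ of Example~\ref{EX 1}.

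First I would isolate the obstruction. By Theorem~\ref{CM}, the hypothesis $\mathrm{KK}(\psi_1)=\mathrm{KK}(\psi_2)$ says that the commutative diagrams $\lambda_{\psi_1},\lambda_{\psi_2}\in C(A,B)$ induced by $\psi_1,\psi_2$ satisfy $\lambda_{\psi_1}-\lambda_{\psi_2}=\lambda_\mu\in M(A,B)$ for some $\mu\in{\rm Hom}(\mathrm{K}_1(SF_2),\mathrm{K}_0(F_1'))$; that is, $(\lambda_{\psi_1})_0-(\lambda_{\psi_2})_0=\mu\circ(\alpha-\beta)$ and $(\lambda_{\psi_1})_1-(\lambda_{\psi_2})_1=(\alpha'-\beta')\circ\mu$. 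Since diagrams are additive under direct sums, it suffices to find an integer $r$, a homomorphism $\zeta\colon A\to M_r(B)$ with finite-dimensional image, and a $1$-standard form homomorphism $\psi_2'$ with
\[
\psi_2\oplus\zeta\sim_h\psi_2'\qquad\text{and}\qquad \lambda_{\psi_2'}=\lambda_{\psi_2}+\lambda_\zeta+\lambda_\mu .
\]
Indeed, then $\lambda_{\psi_2'}=\lambda_{\psi_1}+\lambda_\zeta=\lambda_{\psi_1\oplus\zeta}$; as $\psi_1\oplus\zeta$ is again of $1$-standard form (adjoining the constant-spectrum summand $\zeta$, which I will take to be a sum of point evaluations at points of $Sp(F_1)$, to the $1$-standard form $\psi_1$ only adds basic forms of type $\theta_j$), Lemma~\ref{same diagram} produces a homomorphism $\zeta_0$ with finite-dimensional image such that $(\psi_1\oplus\zeta)\oplus\zeta_0\sim_h\psi_2'\oplus\zeta_0$, and chaining the homotopies gives $\psi_1\oplus(\zeta\oplus\zeta_0)\sim_h\psi_2\oplus(\zeta\oplus\zeta_0)$, with $\zeta\oplus\zeta_0$ of finite-dimensional image.

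So everything reduces to realizing the diagram shift $\lambda_\mu$ by a homotopy of the above form. Here I would work with the minimal summands of $A$ and $B$ so that Lemma~\ref{unitary} is available, and decompose $\mu$ into its matrix entries: it suffices to change, one at a time, the contribution of the $j$-th interval of $Sp(SF_2)$ to the $i'$-th block of $F_1'$, pushing ``from the left'' or ``from the right'' according to the sign of that entry. For $\zeta$ I take a direct sum of point evaluations at points of $Sp(F_1)$, grouped and fit into $M_r(B)$ via Lemma~\ref{unitary} so that over the $j$-th block they reproduce $\varphi_0$ at the $0$-end (resp.\ $\varphi_1$ at the $1$-end) with the multiplicities prescribed by $\alpha,\beta,\alpha',\beta'$. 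The homotopy then drags this group of spectral points across the $j$-th interval of $Sp(SF_2)$, turning the constant pieces into winding ones exactly as $H_\theta$ does in Example~\ref{EX 1}. A single such move changes $\lambda_0$ by trading the $\varphi_0$-type endpoint contribution for the $\varphi_1$-type one (this accounts for $\mu\circ(\alpha-\beta)$) and, simultaneously, changes $\lambda_1$ because the formerly constant pieces acquire a net winding through the $j$-th interval (this accounts for $(\alpha'-\beta')\circ\mu$); the two changes are automatically compatible with the relation $(\alpha'-\beta')(\lambda_{\psi_2'})_0=(\lambda_{\psi_2'})_1(\alpha-\beta)$, since that is precisely the defining property of $\lambda_\mu\in M(A,B)$.

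The main obstacle is exactly this last construction: one must choose the multiplicities inside $\zeta$ and the accompanying unitary so that the \emph{whole} spectrum-pushing family — not merely its two endpoints — takes values in $M_r(B)$ for every homotopy parameter (the $\varphi_0'$- and $\varphi_1'$-boundary conditions being absorbed into a unitary conjugation, which is where Lemma~\ref{unitary} is essential), while the output diagram is \emph{exactly} $\lambda_{\psi_2}+\lambda_\zeta+\lambda_\mu$ and not just something in the same $\mathrm{KK}$-class. Carrying this out uniformly, and then reassembling the minimal summands of $A$ and $B$, is the bookkeeping-heavy heart of the proof.
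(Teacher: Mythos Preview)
Your proposal is correct and follows the same route as the paper's proof: reduce to Lemma~\ref{same diagram} by decomposing $\mu$ into matrix units, and for each unit realize the shift $\lambda_\mu$ by adjoining a block of point evaluations at $Sp(F_1)$ (assembled into $M_r(B)$ via Lemma~\ref{unitary}) and then pushing that block's spectrum across one interval of $Sp(SF_2)$, exactly the mechanism of Example~\ref{EX 1}. The one concrete detail the paper supplies that your outline leaves implicit is a positivity buffer: to guarantee that the intermediate $1$-standard homomorphism has nonnegative residual multiplicities $\Delta_{j'i}$ at each $\theta_i$, the paper pads $\zeta$ by $c$ copies of a ``scale'' diagram $\kappa$ (every column of $\kappa_0$ equal to $[1_B]\in\mathrm{K}_0(B)$) with $c$ large enough.
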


\begin{proof}
$\psi_1,\psi_2$ are two 1-standard form homomorphisms inducing $\lambda_1,\lambda_2\in C(A,B)$.
If $\lambda_1=\lambda_2$, then by Lemma \ref{same diagram}, the proof is finished.

As $\mathrm{KK}(\psi_1)=\mathrm{KK}(\psi_2)$, if $\lambda_1\neq\lambda_2$, by Theorem \ref{CM},
then there exists a map $\mu:\,\mathrm{K_1}(SF_2)\to \mathrm{K_0}(F_1')$ inducing an element $\lambda_\mu\in M(A,B)$ such that
$$
\lambda_1+\lambda_\mu=\lambda_2.
$$
So, as suggested by Lemma \ref{same diagram}, we wish to show that there exists a homomorphism $\eta$, with finite dimensional image and inducing $\xi \in C(A,B)$, such that
$\psi_1\oplus\eta$ is homotopic to a 1-standard form homomorphism inducing the diagram $\lambda_2+\xi$, which is also the diagram induced by $\psi_2\oplus\eta$. And we only need to show that this is true when $\mu$ is any matrix unit, since the stably homotopic relation is an equivalent relation.
($\mu$ is a finite sum of matrix units.)

We shall deal with the case $\mu=e_{11}$; the other cases are similar. Then, $\lambda_\mu\in M(A,B)$ is the diagram
 $$
\xymatrixcolsep{2pc}
\xymatrix{
{\,\,0\,\,} \ar[r]^-{}
& {\,\,\mathrm{K}_0(A)\,\,} \ar[d]_-{0} \ar[r]^-{\pi_*}
& {\,\,\mathrm{K}_0(F_1)\,\,} \ar[d]_-{e_{11}(\alpha-\beta)} \ar[r]^-{\alpha-\beta}
& {\,\,\mathrm{K}_1(SF_2)\,\,} \ar[d]_-{(\alpha'-\beta')e_{11}} \ar[r]^-{\iota_*}
& {\,\,\mathrm{K}_1(A)\,\,} \ar[d]_-{0} \ar[r]^-{}
& {\,\,0\,\,}\\
{\,\,0\,\,} \ar[r]^-{}
& {\,\,\mathrm{K}_0(B)\,\,} \ar[r]_-{\pi_*'}
& {\,\,\mathrm{K}_0(F_1') \,\,} \ar[r]_-{\alpha'-\beta'}
& {\,\,\mathrm{K}_1(SF_2') \,\,} \ar[r]_-{\iota_*'}
& {\,\,\mathrm{K}_1(B)\,\,} \ar[r]^-{}
& {\,\,0\,\,}.}
$$
Let $\kappa$ be the following diagram in $C(A,B)$:
$$
\xymatrixcolsep{2pc}
\xymatrix{
{\,\,0\,\,} \ar[r]^-{}
& {\,\,\mathrm{K}_0(A)\,\,} \ar[d]_-{\kappa_{0*}} \ar[r]^-{\pi_*}
& {\,\,\mathrm{K}_0(F_1)\,\,} \ar[d]_-{\kappa_{0}} \ar[r]^-{\alpha-\beta}
& {\,\,\mathrm{K}_1(SF_2)\,\,} \ar[d]_-{0} \ar[r]^-{\iota_*}
& {\,\,\mathrm{K}_1(A)\,\,} \ar[d]_-{0} \ar[r]^-{}
& {\,\,0\,\,}\\
{\,\,0\,\,} \ar[r]^-{}
& {\,\,\mathrm{K}_0(B)\,\,} \ar[r]_-{\pi_*'}
& {\,\,\mathrm{K}_0(F_1') \,\,} \ar[r]_-{\alpha'-\beta'}
& {\,\,\mathrm{K}_1(SF_2') \,\,} \ar[r]_-{\iota_*'}
& {\,\,\mathrm{K}_1(B)\,\,} \ar[r]^-{}
& {\,\,0\,\,},}
$$
where
$$
\kappa_{0}=
\begin{pmatrix}
  k_1' & k_1' & \cdots & k_1' \\
   k_2'& k_2' & \cdots & k_2' \\
   \vdots & \vdots & \ddots & \vdots \\
   k_p' & k_p' & \cdots & k_p'
 \end{pmatrix}_{p'\times p}.
$$
Recall that $(k_1',k_2',\cdots,k_p')=[1_B]$ is the scale in $\mathrm{K}_0(B)$.
%We can realise this diagram by a homomorphism $\eta_0$ with finite dimensional image, which is a 1-standard form from $A$ to $M_d(B)$. Such as one can choose $\eta_0=\bigoplus_{i=1}^p \theta_i\otimes 1_B$, or just write down an $\eta_0$ with a diagonal form as that suggested in the definition of 1-standard form.

%There exists a large enough integer $c$ ($ck_1'\geq \|e_{11}\beta\|_\infty$), such that
%$\eta=\bigoplus^c\eta_0$
%is homotopic to a 1 standard form homomorphism $\eta'$ inducing the diagram $c\kappa+\lambda_\mu$.

There exists a large enough integer $c$ ($ck_1'\geq \|e_{11}\beta\|_\infty$, and the following $\Delta_{j'i}\geq 0$, for all $j'=1,2,\cdots,l'$, $i=1,2,\cdots,p$) that we have the following 1-standard form homomorphism $\eta_0$ inducing $c\kappa+\lambda_\mu$.

For any  $ j'\in\{1,2,\cdots,l'\}$,
define a homomorphism from $A$ to the algebra $M_r(M_{{h'}_{j'}}(C[0,1]))$ (for some integer $r$):
$$
A\ni (f,a)\stackrel{\phi_{j'}}{\longmapsto}g_{j'}\in M_r(M_{{h'}_{j'}}(C[0,1]))$$
with
\begin{align*}
g_{j'}(t)&= {\rm diag}\{\underbrace{f(t,1),f(t,1),\cdots,f(t,1)}_{\alpha_{j'1}}\}
\\
&\qquad\qquad \oplus
 {\rm diag}\{\underbrace{f(1-t,1),f(1-t,1),\cdots,f(1-t,1)}_{\beta_{j'1}}\}
\\
&\qquad\qquad \oplus
\bigoplus_{i=1} ^p {\rm diag}\{\underbrace{a(\theta_i),a(\theta_i),\cdots,a(\theta_i)}
_{\Delta_{j'i}}\},
\end{align*}
where
$$\Delta_{j'i}=
(\alpha'\circ(e_{11}(\alpha-\beta)+c\kappa_0))_{j'i}-
(\alpha_{11}\cdot\alpha_{j'1}'
+
\beta_{ji}\cdot\beta_{j'1}').$$
Then, with
$$\phi=\bigoplus_{j'=1}^{l'}\phi_{j{'}},$$
by Lemma~\ref{unitary} we have a unitary $u$
such that $\eta_0=u \phi u^*$ is a homomorphism from $A$ to $M_r(B)$ inducing the commutative diagram $c\kappa+\lambda_\mu$.

Furthermore, we can choose the above $u$ who satisfies that
$u H_s u^*$ is also a homomorphism for every $s\in [0,1]$ with
$$
Sp(\theta_{i'}\circ H_s)\cap \coprod_{j=1}^{l} (0,1)_j=\emptyset,\quad i'=2,3,\cdots,p',\,\,s\in(0,1),
$$
where $H_s$ is a homotopy path of homomorphisms from $A$ to $M_r(F_2\otimes C[0,1])$. Write
$$
H_s(f)=\bigoplus_{j'=1}^{l'}\phi_{s,j{'}},
$$
where
$$
A\ni (f,a)\stackrel{\phi_{s,j'}}{\longmapsto}g_{s,j'}\in M_r(M_{{h'}_{j'}}(C[0,1]))$$
with
\begin{align*}
g_{s,j'}(t)&= {\rm diag}\{\underbrace{f(st,1),f(st,1),\cdots,f(st,1)}_{\alpha_{j'1}}\}
\\
&\qquad\qquad \oplus
 {\rm diag}\{\underbrace{f(1-st,1),f(1-st,1),\cdots,f(1-st,1)}_{\beta_{j'1}}\}
\\
&\qquad\qquad \oplus
\bigoplus_{i=1} ^p {\rm diag}\{\underbrace{a(\theta_i),a(\theta_i),\cdots,a(\theta_i)}
_{\Delta_{j'i}}\}.
\end{align*}
Then $u H_s u^*$ induces a homotopy
$$
\eta_0\sim_h \eta,
$$
where $\eta=u H_1 u^*$ is a homomorphism inducing the diagram $c\kappa$ with finite dimensional image.
(One can imagine for comparison the simple example that the identity map from C[0,1] to C[0,1] is homotopic to the point evaluation at 1.)

Now we have
$$
\psi_1\oplus \eta\sim_h\psi_1\oplus \eta_0,
$$
and $\psi_1\oplus \eta_0$ induces $\lambda_1+c\kappa+\lambda_\mu=\lambda_2+c\kappa$.

For the general $\mu=\mu_{i'j}e_{i'j}$, we repeat the above construction $\sum_{j=1}^{l} \sum_{i'=1}^{p'}\mid\mu_{i'j}\mid$ times. The conclusion of the lemma follows with $\zeta$ is the sum of such $\sum_{j=1}^{l} \sum_{i'=1}^{p'}\mid\mu_{i'j}\mid$ homomorphisms with finite dimensional images.

\end{proof}
\begin{proof}[Proof of Theorem \ref{main result}]
We only need to show that two homomorphism $\psi_1,\psi_2$ inducing the same $\mathrm{KK}$-class are stably homotopic. By Theorem \ref{homo to m-standard}, we have the two homomorphisms $\psi_1,\psi_2$ homotopic to $\psi_1',\psi_2'$, which are of $m_1$- and $m_2$-standard form, respectively.
By Lemma \ref{m to 1}, there exists a homomorphism $\eta_1$ such that the homomorphisms
$$\psi_1'\oplus\eta_1,\psi_2'\oplus\eta_1$$ are homotopic to two 1-standard form homomorphisms $\psi_1'',\psi_2''$, respectively. And at last, by Lemma \ref{different diagram homotopic}, there exists a homomorphism $\eta_2$ such that
$$\psi_1''\oplus\eta_2\sim_h\psi_2''\oplus\eta_2.$$
That is,
$$
\psi_1\oplus\eta_1\oplus\eta_2\sim_h\psi_2\oplus\eta_1\oplus\eta_2,
$$
as desired.
\end{proof}

The property (H) of $\mathrm{C}^*$-algebras plays an important role in the classification theory, we use stable homotopy to prove that any one-dimensional NCCW complex has this property.

\begin{definition}[\cite{D1}]
A $\mathrm{C}^*$-algebra is said to have property (H) if, for any finite $F \subset A$ and $\varepsilon > 0$, there exist a homomorphism $\rho :A \rightarrow M_{r-1}(A)$  and a homomorphism $\mu:A \rightarrow M_r(A)$ with finite dimensional image such that
  $$
  \|f\oplus\rho(f)-\mu(f)\|<\varepsilon,\quad \forall f\in F.
  $$
\end{definition}
\begin{theorem}
  Every unital one-dimensional NCCW complex has property (H).
\end{theorem}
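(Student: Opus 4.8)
The plan is to deduce Property (H) from the main result (Theorem \ref{main result}) together with semiprojectivity of the algebras in $\mathcal{C}$. Let $A$ be a unital one-dimensional NCCW complex, fix a finite set $F\subset A$ and $\varepsilon>0$. First I would take the identity homomorphism $\mathrm{id}_A:A\to A$ and the ``evaluation'' homomorphism $\beta:A\to F_1\hookrightarrow A$ obtained by composing $\pi:A\to F_1$ with a point-evaluation type embedding of $F_1$ back into $A$ (sending each matrix block of $F_1$ to a constant function with the correct boundary behaviour); the key point is that $\beta$ has finite dimensional image. The first step is to check that $\mathrm{id}_A$ and $\beta$ induce the same class in $\mathrm{KK}(A,A)$ --- this is a $K$-theoretic computation: on $\mathrm{K}_0$ both give the canonical inclusion $\mathrm{K}_0(A)\hookrightarrow\mathrm{K}_0(F_1)$ composed appropriately, and one can arrange $\beta$ so that the induced commutative diagram in $C(A,A)$ differs from that of $\mathrm{id}_A$ by an element of $M(A,A)$ (indeed $\beta$ factors through $F_1$, killing $\mathrm{K}_1(A)$, and the difference is realized by a map $\mu:\mathrm{K}_1(SF_2)\to\mathrm{K}_0(F_1)$). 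Alternatively, and more cleanly, one replaces $\beta$ by a finite-dimensional-image homomorphism in the same homotopy class as something $\mathrm{KK}$-equivalent to $\mathrm{id}_A$; the constructions in the proof of Lemma \ref{different diagram homotopic} (where homomorphisms inducing a given diagram with finite dimensional image are built explicitly via Lemma \ref{unitary}) provide exactly such a gadget.

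Granting that $\mathrm{KK}(\mathrm{id}_A)=\mathrm{KK}(\beta)$, Theorem \ref{main result} gives a homomorphism $\eta:A\to M_r(A)$ \emph{with finite dimensional image} such that $\mathrm{id}_A\oplus\eta\sim_h\beta\oplus\eta$ as homomorphisms $A\to M_{r+1}(A)$. Denote the homotopy by $\Phi_\theta$, $\theta\in[0,1]$. The second step is to invoke semiprojectivity: by \cite{ELP1} the algebra $A$ is semiprojective and finitely generated, so by Lemma \ref{homolift} there is $\delta>0$ (depending only on $A$ and a fixed finite generating set $\mathcal{G}$, which we may take to contain $F$ after rescaling) such that any two homomorphisms out of $A$ that are within $\delta$ on $\mathcal{G}$ are homotopic. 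Apply uniform continuity of $\theta\mapsto\Phi_\theta$ to the \emph{finite} set $F$: choose a partition $0=\theta_0<\theta_1<\cdots<\theta_N=1$ so fine that $\|\Phi_{\theta_{k-1}}(f)-\Phi_{\theta_k}(f)\|<\varepsilon$ for all $f\in F$. This is the mechanism by which the homotopy is converted into a norm estimate.

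Now set $\rho:=\eta$ viewed appropriately and $\mu:=\beta\oplus\eta$; then $\mu$ has finite dimensional image (both $\beta$ and $\eta$ do), and $\mathrm{id}_A\oplus\rho=\Phi_0$ while $\mu=\Phi_1$. Reading the endpoints of the homotopy directly gives $\|f\oplus\rho(f)-\mu(f)\|=\|\Phi_0(f)-\Phi_1(f)\|$, which is \emph{not} automatically small --- the homotopy only guarantees $\Phi_0\sim_h\Phi_1$, not closeness. This is the main obstacle, and here is where I expect the real argument: one should not use the crude endpoint comparison but instead exploit that a homotopy of homomorphisms can be \emph{absorbed}. Concretely, by adding on further point-evaluation summands along the path $\{\Phi_{\theta_k}\}$ (the same trick as in Remark \ref{trick} and Lemma \ref{m to 1}, now applied to the homotopy parameter rather than to a standard-form spectrum), one builds a single finite-dimensional-image homomorphism $\mu$ with $M_r$-size large enough that $f\oplus\rho(f)$ and $\mu(f)$ literally agree up to a unitary conjugation that moves elements of $F$ by less than $\varepsilon$; the telescoping over the fine partition keeps each step below $\varepsilon$, and semiprojectivity is what lets us glue the pieces into honest homomorphisms into $M_*(A)$ rather than into $M_*(C([0,1],A))$. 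Thus the hard part is packaging the stable homotopy supplied by Theorem \ref{main result} as an $\varepsilon$-approximate factorization through a finite-dimensional-image map, and the tools for that packaging --- Lemma \ref{homolift}, Lemma \ref{appro3}, and the conjugation tricks of Section 4 --- are all already in place.
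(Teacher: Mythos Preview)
There is a genuine gap at the very first step. You claim that $\mathrm{KK}(\mathrm{id}_A)=\mathrm{KK}(\beta)$ for some finite-dimensional-image homomorphism $\beta$, arguing that the difference of the induced diagrams lies in $M(A,A)$. But any homomorphism with finite-dimensional image factors through a finite-dimensional $\mathrm{C}^*$-algebra and therefore induces the zero map on $\mathrm{K}_1(A)$. Hence the difference diagram between $\mathrm{id}_A$ and $\beta$ has $\lambda_{1*}=\mathrm{id}_{\mathrm{K}_1(A)}$, which is \emph{not} zero unless $\mathrm{K}_1(A)=0$; by the definition of $M(A,A)$ (see \ref{cmset}) such a difference cannot lie in $M(A,A)$, so in general $\mathrm{KK}(\mathrm{id}_A)\neq\mathrm{KK}(\beta)$. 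The ``alternative'' you suggest runs into the same obstruction: no finite-dimensional-image map can ever agree with $\mathrm{id}_A$ in $\mathrm{KK}(A,A)$ when $\mathrm{K}_1(A)\neq 0$. The paper handles this by first building an auxiliary $1$-standard homomorphism $\phi:A\to M_m(A)$ whose diagram has $\lambda_1'=-I_{l\times l}$, so that $\mathrm{id}_A\oplus\phi$ has vanishing $\mathrm{K}_1$-map and \emph{then} can be matched in $\mathrm{KK}$ with a finite-dimensional-image $\psi$. This extra $\phi$ is precisely the $\rho$ appearing in Property~(H); it cannot be chosen to have finite-dimensional image.

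There is a second, independent gap in the passage from stable homotopy to the norm estimate. Your telescoping sketch along a partition of the homotopy parameter does not produce an $\varepsilon$-approximation of $f\oplus\rho(f)$ by a single finite-dimensional-image $\mu(f)$; at best it produces a chain of homotopic maps that are pairwise close, which is not the statement of Property~(H). The paper does something quite different here: it sets $\sigma=\mathrm{id}\oplus\phi\oplus\eta$, adjoins point evaluations at a dense set of $(0,1)$ to form connecting maps $\nu_{i,i+1}=\sigma\oplus\mu_{x_i}$, and passes to the inductive limit $D=\varinjlim(M_{(s+st)^i}(A),\nu_{i,i+1})$. One checks that $D$ has real rank zero and $\mathrm{K}_1(D)=0$, and then invokes \cite[Theorem~3.1]{ELP2} to approximate the inclusion $A\hookrightarrow D$ by finite-dimensional-image homomorphisms at some finite stage. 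This inductive-limit/real-rank-zero mechanism is the actual source of the $\varepsilon$-approximation, and nothing in your proposal substitutes for it.
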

\begin{proof}
  By Theorem \ref{CM}, the map $id:A\rightarrow A$ induces the following commutative diagram, $\lambda$
 $$
\xymatrixcolsep{2pc}
\xymatrix{
{\,\,0\,\,} \ar[r]^-{}
& {\,\,\mathrm{K}_0(A)\,\,} \ar[d]_-{\lambda_{0*}} \ar[r]^-{\pi_*}
& {\,\,\mathrm{K}_0(F_1)\,\,} \ar[d]_-{\lambda_{0}} \ar[r]^-{\alpha-\beta}
& {\,\,\mathrm{K}_1(SF_2)\,\,} \ar[d]_-{\lambda_{1}} \ar[r]^-{\iota_*}
& {\,\,\mathrm{K}_1(A)\,\,} \ar[d]_-{\lambda_{1*}} \ar[r]^-{}
& {\,\,0\,\,}\\
{\,\,0\,\,} \ar[r]^-{}
& {\,\,\mathrm{K}_0(A)\,\,} \ar[r]_-{\pi_*}
& {\,\,\mathrm{K}_0(F_1) \,\,} \ar[r]_-{\alpha-\beta}
& {\,\,\mathrm{K}_1(SF_2) \,\,} \ar[r]_-{\iota_*}
& {\,\,\mathrm{K}_1(A)\,\,} \ar[r]^-{}
& {\,\,0\,\,},}
$$
 where $\lambda_0=I_{p\times p}$, $\lambda_1=I_{l\times l}$.

 Construct a commutative diagram $\lambda'\in C(A,A)$ as follows:
 $$
\xymatrixcolsep{2pc}
\xymatrix{
{\,\,0\,\,} \ar[r]^-{}
& {\,\,\mathrm{K}_0(A)\,\,} \ar[d]_-{\lambda_{0*}'} \ar[r]^-{\pi_*}
& {\,\,\mathrm{K}_0(F_1)\,\,} \ar[d]_-{\lambda_{0}'} \ar[r]^-{\alpha-\beta}
& {\,\,\mathrm{K}_1(SF_2)\,\,} \ar[d]_-{\lambda_{1}'} \ar[r]^-{\iota_*}
& {\,\,\mathrm{K}_1(A)\,\,} \ar[d]_-{\lambda_{1*}'} \ar[r]^-{}
& {\,\,0\,\,}\\
{\,\,0\,\,} \ar[r]^-{}
& {\,\,\mathrm{K}_0(A)\,\,} \ar[r]_-{\pi_*}
& {\,\,\mathrm{K}_0(F_1) \,\,} \ar[r]_-{\alpha-\beta}
& {\,\,\mathrm{K}_1(SF_2) \,\,} \ar[r]_-{\iota_*}
& {\,\,\mathrm{K}_1(A)\,\,} \ar[r]^-{}
& {\,\,0\,\,},}
$$
 with

 $$
 \lambda_0'=L
 \begin{pmatrix}
  k_1 & k_1 & \cdots & k_1 \\
   k_2& k_2 & \cdots & k_2 \\
   \vdots & \vdots & \ddots & \vdots \\
   k_p & k_p & \cdots & k_p
 \end{pmatrix}-\lambda_0  \quad  and  \quad \lambda_1'=-\lambda_1,
 $$
where $L\in \mathbb{N}$ is large enough. Consider the sum $\zeta=\lambda+\lambda'$:
 $$
\xymatrixcolsep{2pc}
\xymatrix{
{\,\,0\,\,} \ar[r]^-{}
& {\,\,\mathrm{K}_0(A)\,\,} \ar[d]_-{\lambda_{0*}+\lambda_{0*}'} \ar[r]^-{\pi_*}
& {\,\,\mathrm{K}_0(F_1)\,\,} \ar[d]_-{\lambda_{0}+\lambda_{0}'} \ar[r]^-{\alpha-\beta}
& {\,\,\mathrm{K}_1(SF_2)\,\,} \ar[d]_-{\lambda_{1}+\lambda_{1}'} \ar[r]^-{\iota_*}
& {\,\,\mathrm{K}_1(A)\,\,} \ar[d]_-{\lambda_{1*}+\lambda_{1*}'} \ar[r]^-{}
& {\,\,0\,\,}\\
{\,\,0\,\,} \ar[r]^-{}
& {\,\,\mathrm{K}_0(A)\,\,} \ar[r]_-{\pi_*}
& {\,\,\mathrm{K}_0(F_1) \,\,} \ar[r]_-{\alpha-\beta}
& {\,\,\mathrm{K}_1(SF_2) \,\,} \ar[r]_-{\iota_*}
& {\,\,\mathrm{K}_1(A)\,\,} \ar[r]^-{}
& {\,\,0\,\,}.}
$$

Then we can construct a homomorphism $\phi:A\rightarrow M_m(A)$ of 1-standard form inducing the diagram $\lambda'$ for some $m$ (just as in the construction in the proof of Lemma \ref{different diagram homotopic}; see also \cite[Lemma 3.6]{AE}) and a homomorphism $\psi$ from $A$ to $M_{m+1}(A)$ with finite dimensional image inducing the diagram $\zeta$. Then by Theorem \ref{CM},
 $$
 KK(id\oplus\phi)=KK(\psi).
 $$
 Hence by Theorem \ref{main result}, there exists a homomorphism $\eta:A\rightarrow M_n(A)$ with finite dimensional image such that
 $$
 id\oplus\phi\oplus\eta\,\sim_h\, \psi\oplus\eta.
 $$

 Set $s=m+n+1$, $\sigma=id\oplus\phi\oplus\eta$, $t=\sum_{j=1}^{l}h_j$, for any $x\in (0,1)$, denote
 $$
 \mu_x=\bigoplus_{j=1}^l{\rm diag}\{\underbrace{\sigma_{(x,j)},\sigma_{(x,j)},\cdots,\sigma_{(x,j)}}_t\},
 $$
 then there exists a unitary $u$ such that $u\cdot\mu_x\cdot u^*$ gives a homomorphism from $A$ to $M_{st}(A)$.

 Choose $x_0,x_1,x_2,\cdots$ be a dense set of $(0,1)$. Let $D$ be a $C^*$-algebra constructed as an inductive limit $D=\lim (D_i,\nu_{i,j})$, where $D_i=M_{(s+st)^i}(A)$, $i\geq 0$ and $\nu_{i,i+1}(f)=\sigma(f)\oplus \mu_{x_i}(f)$.($\sigma$ extends to $M_k(A)$ for any $k$, here we still denote it by $\sigma$). A similar construction can be found in
 \cite[Lemma 6.1]{DG}. The perturbation coming from Remark \ref{trick} (by changing the $\frac{1}{3},\,\frac{2}{3}$ into $\frac{1}{2}-\delta$, $\frac{1}{2}+\delta$, respectively, with small $\delta>0$) shows that the inductive limit has real rank zero. Since the connecting maps always induce the zero map on $\mathrm{K}_1$-group, we have $K_1(D)=0$, and it follows from Theorem 3.1 of \cite{ELP2} that the inclusion $\nu_{0,\infty}$ of $D_0=A$ into $D$ can be approximated arbitrarily well by homomorphisms with finite dimensional range.

 Then for any finite set $F\subset A$ and $\varepsilon>0$, there exist $i$ and a homomorphism $\mu: A\rightarrow D_i$ with finite dimensional range such that $\|\nu_{0,i}(f)-\mu(f)\|<\varepsilon$ for all $f\in F$. Since $\nu_{0,i}$ is of the form $id\oplus \rho$ for some homomorphism $\rho$, this completes the proof.

\end{proof}

\section*{Acknowledgements}
The research of Qingnan An and Zhichao Liu was supported by the University of Toronto and NNSF of China (No.:11531003), both the the authors thank the  Fields Institute for their hospitality; the research of the second author was supported by the Natural Sciences and Engineering Council of Canada; the research of Yuanhang Zhang was partly supported by the Natural Science Foundation for Young Scientists of Jilin Province (No.:20190103028JH) and NNSF of China (No.:11601104, 11671167, 11201171) and the China Scholarship Council.

{\small}
\end{document}